\newtheorem{theorem}{Theorem}
\newtheorem{theoremc}{Theorem}
\newtheorem{rk}[theoremc]{Remark\!\!}
\newtheorem{cor}[theoremc]{Corollary\!\!}
\newtheorem{lem}[theorem]{Lemma}
\newtheorem{prop}[theorem]{Proposition}
\newcommand\bib[1]{\bibitem[#1]{#1}}
\renewcommand\1{{\bf 1}}
\newcommand\com[1]{}
\newcommand\C{{\mathbb C}}
\newcommand\Cc{{\let\mathcal\mathscr\mathcal C}}
\renewcommand\d{\delta}
\newcommand\DD{{\mathbb{D}}}
\newcommand\E{\mathcal{E}}
\newcommand\g{{\frak g}}
\newcommand\La{\Lambda}
\newcommand\N{{\mathbb N}}
\newcommand\oo{\omega}
\newcommand\op[1]{\mathop{\rm #1}\nolimits}
\newcommand\ot{\otimes}
\newcommand\p{\partial}
\newcommand\R{{\mathbb R}}
\renewcommand\t{\tau}
\newcommand\vp{\varphi}
\newcommand\z{\sigma}
\newcommand\Z{{\mathbb Z}}
\begin{document}

 \title[Poincar\'e function of geometric structures]{Poincar\'e function for moduli of
differential-geometric structures}
 \author{Boris Kruglikov}
 \date{}
\address{Department of Mathematics and Statistics, UiT the Arctic University of Norway, Troms\o\ 90-37, Norway.
\quad E-mail: {\tt boris.kruglikov@uit.no}. }
 \keywords{Differential Invariants, Invariant Derivations, conformal metric structure,
Hilbert polynomial, Poincar\'e function}

 \vspace{-14.5pt}
 \begin{abstract}
The Poincar\'e function is a compact form of counting moduli in local geometric problems.
We discuss its property in relation to V.\,Arnold's conjecture, and derive this conjecture 
in the case when the pseudogroup acts algebraically and transitively on the base.
Then we survey the known counting results for differential invariants and derive new formulae
for several other classification problems in geometry and analysis.
 \end{abstract}

 \maketitle

\section*{Introduction}

The Poincar\'e function counts the number of moduli in algebro-geometric problems.
Classically, for a graded algebra $A=\oplus_{i\ge0}A_i$ with $a_i=\dim A_i<\infty$,
this function is given by
 $$
P(z)=\sum_{i=0}^\infty a_i z^i.
 $$
In particular, this applies to the algebra of invariants $A=R[X]^G$
of an algebraic action of a Lie group $G$ on a variety $X$.
It encodes grows of the number of invariants with their algebraic degree.
For semi-simple Lie groups $G$ this has received numerous applications,
see e.g. description of 
invariants and covariants of binary forms in \cite{Sp}.

In the same vein, the Poincar\'e function is used in the local analysis of differential-geometric problems.
As the setup let $G$ be an algebraic pseudogroup (the definitions will be recalled in Section \ref{S1})
acting on a space $\E$ of geometric objects, which can be the space of sections of a tensor bundle of
prescribed type or the sheaf of solutions to a certain geometric equation. However instead of considering
global sections or germs of those, we shall conveniently work with their jets.

Thus $\E$ consists of (jets of) sections of a bundle $\pi$ and $G$ consists of local diffeomorphisms of $J^0\pi$.
Prolong the action of $G$ to higher jets $J^k\pi$, possibly restricting to invariant subbundles $\E^k$ such that the
projections $\pi_{k,k-1}:\E^k\to\E^{k-1}$ are submersions. Thus $\E$ is a co-filtered manifold, also known as diffiety, the projective limit of $\E^k$. In most cases of interest $\E$ is either un-constrained or a formally integrable
differential equation. A more general setup will be given in Section \ref{S1}.

Due to algebraic nature of the action, it possesses a rational quotient $\mathcal{Q}_k=\E^k/G$
(space of $G$-orbits in $\E^k$) by the Rosenlicht theorem \cite{R} and the prolongation result of \cite{KL}.
Rational functions on $\mathcal{Q}_k$ are bijective with $G$-invariant rational functions on $\E^k$
and are called (global) scalar differential invariants of order $k$; their pole divisors are $G$-invariant.

Dimension $s_k$ of $\mathcal{Q}_k$ is equal to the transcendence degree of the field
of rational differential invariants of order $\le k$, and it corresponds to the number of
(functionally) independent invariants of such order. The difference $h_k=s_k-s_{k-1}$
can be interpreted as the number of ``pure order" $k$ differential invariants.

Under certain assumptions, the sequence $h_k$ (as well as $s_k$) or its arithmetic sub-sequence $h_{ak+b}$
(where $a\in\N$ is fixed, and $0\leq b<a$ varies) is a polynomial for $k\gg1$, called the Hilbert polynomial.
This fact was experimentally observed by V.\,Arnold for some local problems in analysis and geometry \cite{A}.
Later this conjecture was proved for the diffeomorphism pseudogroups acting on natural geometric bundles
in \cite{Sar}, and then for general algebraic pseudogroups acting on algebraic differential equations
in \cite{KL}. The basic assumption in the last reference, in addition to algebraicity, is transitivity of the
pseudogroup action on the base manifold, and we adapt this also in what follows.

The Poincar\'e function of this action is defined by
 \begin{equation}\label{P}
P(z)=\sum_{k=0}^\infty h_kz^k.
 \end{equation}
The series clearly converges and gives an analytic function in the disk $|z|<1$.
Under the above assumptions, it is a rational function with the only pole at $z=1$.
The Hilbert function is restored by the formula
 $$
h_k=\op{Res}\limits_{z=0}\frac{P(z)}{z^{k+1}}=\frac1{k!}\left.\frac{d^k}{dz^k}\right|_{z=0}P(z),
 $$
and the number of independent differential invariants of order $\leq k$ by the formula
(in both cases $k\gg1$)
 $$
s_k=\op{Res}\limits_{z=-1}\frac{P(z+1)}z-\op{Res}\limits_{z=0}\frac{P(z)}{z^{k+1}(z-1)}.
 $$
Thus Poincar\'e function is the generating function encoding the count for differential invariants.

Often, the quotient $\mathcal{Q}=\E/G$, co-filtered by $\mathcal{Q}_k$, has functional rank $\sigma$
and functional dimension $d$, meaning that the number of jets in $\mathcal{Q}$
is asymptotic to that for the space of jets of $\sigma$ functions of $d$ arguments:
$s_k\sim\sigma\cdot\binom{k+d-1}{d-1}$. This implies at once that
 \begin{equation}\label{PR}
P(z)=\frac{R(z)}{(1-z)^d},
 \end{equation}
for some polynomial $R(z)$, $R(1)\neq0$, so the functional dimension $d$ is easily identifyable.
The functional rank can be found by the change of variables $w=1-z$: $P(1-w)=\sigma w^{-d}+\sum_{i>-d} p_i w^i$
where the sum is finite. This yields $\sigma=R(1)$, so the functional dimension and rank express so:
 $$
d=-\lim_{z\to1}\frac{\log P(z)}{\log(1-z)},\qquad
\sigma=\lim_{z\to1}P(z)(1-z)^d.
 $$
More complicated Poincar\'e series than \eqref{PR}, leading to different formulas for $(d,\sigma)$,
are briefly discussed at the end of the paper, see Section \ref{S4}.

This paper has the following three goals:

 \begin{enumerate}
\item Discuss rationality of the function $P(z)$,
deducing a strong form of Arnold's conjecture \cite[Problem 1994-24]{A} in the case the action is algebraic
and transitive on the base, see Theorem \ref{T3};
\item Provide explicit rational formulae for $P(z)$ in many classical examples,
summarizing (sometimes correcting and generalizing) and compactifying the known results from the literature;
\item
Compute the Poincar\'e function for several new important cases, including an infinite type geometric structure,
which is a novel local result in almost complex geometry.
 \end{enumerate}
\noindent
These problems will be subsequently addressed in the further sections.
Validity of (1) relies on a derivation of the main result in \cite{KL}, which we recall in Section \ref{S1}
and then indicate modifications required to achieve the claim. The tools important to compute
the Poincar\'e function 
are given in Section \ref{S2}, and then the results of (2) and (3) are presented in
Section \ref{S3}, constituting the main body of this paper. An outlook is given in Section \ref{S4}.

\section{On Arnold's conjecture}\label{S1}

A weak form of the Arnold conjecture states that the Poincar\'e function $P(z)$ is rational.
The coefficient $h_k=s_k-s_{k-1}$ of $z^k$ in \eqref{P} expresses through the codimension $s_k$ of generic
$G$-orbits in $\E^k$, but this $s_k$ can be also understood as codimension of a particular orbit
$G\cdot a_k$ through $a_k\in\E^k$ and then it depends on this point.
If the action is algebraic, then $s_k$ is constant on a Zariski open subset $\E'_k\subset\E^k$ \cite{R,KL}.
Uniting these yields a Zariski open set $\E'\subset\E$ on which $P(z)$ is rational \cite{KL}.

A strong form of Arnold's conjecture states that there exists a subset $\Sigma\subset\E$ of $\op{codim}\Sigma=\infty$
and a (co-filtered) stratification $\E\setminus\Sigma=\cup_\alpha\E_{\alpha}$ such that with
$s^{\alpha}_k=\op{codim}(G\cdot a_k\subset\E^k)$, $a_k\in\E^k_{\alpha}$, and
$h^{\alpha}_k=s^{\alpha}_k-s^{\alpha}_{k-1}$, the corresponding Poincar\'e function
$P_{\alpha}(z)=P(a_\infty;z)=\sum_{k=0}^\infty h^{\alpha}_kz^k$ is rational for every $\alpha$ (i.e.\ for every
$a_\infty=\{a_k\}_{k=0}^\infty\in\E^\infty_a$, $a=a_0\in M$).

\subsection{A solution of the conjecture}

A pseudogroup is a collection of local diffeomorphisms $G\subset\op{Diff}_{\text{loc}}(M)$ that contains unit,
inverse, and composition whenever defined. It is called a Lie pseudogroup if its elements are solutions to a
system of differential equations, see \cite{Lie2,SS,Ku}. Thus we identify $G$ with a projective limit of subsets
$G^k\subset J^k(M,M)$ that give a formally integrable Lie equation.
Since local and formal diffeomorphisms have the same differential invariants (see below), we will
not make a distinction between them.

Denote by $J^k_n(M)$ the space of $k$-jets of $n$-dimensional submanifolds $N\subset M$; note that
$J^k(M,M)\subset J^k_m(M\times M)$ for $m=\dim M$. A differential equation $\E$ is a collection of
submanifolds $\E^k\subset J^k_n$, $\E^0=J^0_n=M$, such that the projections $\pi_{k,k-1}:\E^k\to\E^{k-1}$ are
submersions (note the un-constraint case: $\E^k=J^k_n$).
It is called formally integrable if $\E^k$ is a subset of the prolongation of $\E^{k-1}$, i.e.\
the defining relations of $\E^k$ are obtained by differentiations of those of $\E^{k-1}$.
Recall that there is a natural algebraic structure on fibers of $J^k_n$.
If the defining relations of $\E$ are algebraic (in jets of order $\ge1$; for simplicity, we assume no relation
of order zero is imposed on $\E$), the equation is called algebraic.

In particular, if the Lie equation is algebraic we call the pseudogroup $G$ algebraic. It naturally acts on the
jet-spaces $J^k_n$, and the equation $\E$ is called $G$-invariant if $G\cdot\E^k\subset\E^k$. Equivalently,
if $\mathcal{G}$ is the Lie algebra sheaf of $G$ (local vector fields $X=\frac{dg_t}{dt}|_{t=0}$ for paths
$g_t\subset G$, $g_0=\op{Id}$), then $\E$ is $G$-invariant if $X_{a_k}\in T_{a_k}\E^k$ for all $X\in\mathcal{G}$,
$a_k\in\E^k$.

A function $f$ on $\E$ is by definition a function $f:\E^k\to\R$ for some $k$ pulled back to $\E^\infty$.
It is $G$-invariant if $g^*f=f$ for any $g\in G$. Provided $G$ is connected,
this is equivalent to $L_Xf=0$ for any $X\in\mathcal{G}$.

Consider the field of rational functions $\mathfrak{R}(\E)=\cup_k\mathfrak{R}(\E^k)$
and its subfield of rational invariants $\mathfrak{F}=\mathfrak{R}(\E)^G$.
If $G$ is Zariski-connected (we assume this in what follows), $f$ is a rational differential invariant
iff $L_Xf=0$ for any $X\in\mathcal{G}$.
By Rosenlicht's theorem \cite{R}, elements of $\mathfrak{F}_k=\mathfrak{R}(\E^k)^G$ separate regular
$G$-orbits and the transcendence degree of $\mathfrak{F}_k$ is the codimension $s_k$ of a generic orbit in $\E^k$.

 \begin{rk}
By \cite{KL} there exists a natural number $l$ such that the subalgebra $\mathfrak{A}$ of invariant functions that are rational by jets of order $\leq l$ and polynomial by jets of higher order suffices to separate regular orbits.
 \end{rk}

In addition to differential invariants one defines invariant derivations as first order operators
in total derivatives $\nabla:\mathfrak{R}(\E)\to\mathfrak{R}(\E)$ commuting with the action of $G$.
Global Lie-Tresse theorem \cite{KL} states that the field $\mathfrak{F}$ (and the algebra $\mathfrak{A}$)
is generated by a finite number of differential invariants $I_i$ and a finite number of invariant derivations $\nabla_j$. Moreover, loc.cit.\ proves that the invariant syzygies and higher syzygies are also finitely generated in the Lie-Tresse
sense. This implies that $s_k$ and hence $h_k$ are polynomials in $k$ for $k\gg1$, whence the following claim \cite[Theorem 26]{KL}:

 \begin{theorem}\label{T1}
Consider an algebraic action of a connected pseudogroup $G$ on an irreducible algebraic differential equation $\E\subset J^\infty_n(M)$. Assume that $G$ acts transitively on $M$. Then the Poincar\'e function $P(z)$ of this action is rational and has form \eqref{PR}, where the degree $d$ of the only pole $z=1$ does not exceed the degree of the complex affine characteristic variety of $\E$; in particular $d\leq n$.
 \end{theorem}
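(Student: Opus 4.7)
The plan is to derive rationality and the shape (\ref{PR}) from the global Lie--Tresse theorem together with a Hilbert--Serre argument on an auxiliary graded symbolic module, and then to bound the pole order by invariants of the characteristic variety.

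First I would invoke the global Lie--Tresse theorem cited above to fix finite generators: rational differential invariants $I_1,\dots,I_r$ and invariant derivations $\nabla_1,\dots,\nabla_m$ such that every element of $\mathfrak{F}$ arises as a rational combination of iterated derivatives $\nabla_{j_1}\!\cdots\nabla_{j_p}I_i$. The second ingredient of loc.\ cit.---finite generation of the syzygies and higher syzygies---lets me package the symbols of these invariants at successive jet orders as a finitely generated graded module $S$ over the polynomial ring $\R[\xi_1,\dots,\xi_n]$ attached to the symbol of $\E$ at a generic point of $M$. Transitivity of $G$ on $M$ makes it legitimate to work at a single such point: by Rosenlicht together with algebraicity, $s_k$ is then realized as the transcendence degree of $\mathfrak{F}_k$ on a Zariski-open $\E'_k\subset\E^k$, independently of the base point.

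Second, Hilbert--Serre applied to $S$ produces a polynomial of degree $d-1$ agreeing with $h_k$ for $k\gg 1$, where $d$ is the Krull dimension of $\op{supp}S$. Summing the generating series of an eventually polynomial sequence of degree $d-1$ gives a rational function of the form $R(z)/(1-z)^d$ with $R(1)\ne 0$, which is exactly (\ref{PR}); this settles the rationality and the shape. For the bound on $d$, the support of $S$ embeds (after complexification) inside the complex affine characteristic variety $\op{Char}^{\C}_{\text{aff}}(\E)$, since invariant derivations can raise jet order only along cotangent directions permitted by the symbol of $\E$. Hence $d$ is bounded by the degree of $\op{Char}^{\C}_{\text{aff}}(\E)$, and because this variety sits in $\C^n$ we get $d\leq n$ for free.

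The main obstacle is the last identification: converting the abstract finite generation supplied by Lie--Tresse into the concrete graded symbolic module and matching its support with the characteristic variety. Rationality and the overall form (\ref{PR}) are bookkeeping consequences of finite generation plus Hilbert--Serre; the delicate piece is tracking, at each order $k$, which new symbols the invariant derivations produce and verifying that these symbols are supported on $\op{Char}^{\C}_{\text{aff}}(\E)$ rather than somewhere larger. Transitivity of $G$ on $M$ is essential here to get a single dimension $d$ instead of a stratum-dependent family of Poincar\'e functions.
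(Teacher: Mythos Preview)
Your proposal is correct and follows essentially the same route the paper outlines immediately before the statement: invoke the global Lie--Tresse theorem of \cite{KL} for finite generation of invariants, derivations, and their syzygies, deduce that $h_k$ is eventually polynomial, and read off form \eqref{PR}; the paper does not reprove Theorem~\ref{T1} in situ but cites it as \cite[Theorem~26]{KL}. It is worth noting, however, that when the paper \emph{does} give a self-contained argument---for the generalization in Theorem~\ref{T2}---it bypasses the Hilbert--Serre/module packaging you propose and instead works directly with the Spencer $\delta$-complexes of the symbol spaces $\varpi_k$, $g_k$, $\mathfrak{d}_k$, using a snake-lemma comparison to force $H^{i,j}(\mathfrak{d})=0$ for $i\gg0$ and hence polynomial growth of $\dim\mathfrak{d}_k$; that route avoids the step you correctly flag as delicate (identifying the support of your auxiliary module with the characteristic variety) and gives the pole bound more transparently.
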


This gives a solution of (the weak form of) Arnold's conjecture under the assumptions of the theorem, of which
the most crucial is the transitivity of $G$-action on the base $M$. We will comment at the end of the paper
on what happens when this assumption is violated.

\subsection{A generalization: strong version of the conjecture}

We claim that the previous statement holds true for a more general class of submanifolds
$\E^\infty\subset J^\infty_n$ co-filtered by $\E^k\subset J^k_n$
as long as the basic assumptions of Theorem \ref{T1} are satisfied.

The setup is as follows. Let $\bar\E$ be a formally integrable differential equation co-filtered by
$\bar\E^k\subset J^k_n$. Consider a finite number of functions (nonlinear differential operators)
$\Phi_s:J^{k_s}_n\to\R$. Let $\sigma_k=\{s:k_s\le k\}$. Define
$\E^k=\{a_k\in\bar\E^k:\Phi_s(a_s)=0\,\forall s\in\sigma_k\}$. We assume
regularity: the projections $\pi_{k,k-1}:\E^k\to\E^{k-1}$ are submersions.

Thus we allow $\E^k$ to be not a part of the prolongation of $\E^{k-1}$, but this can happen only for a finite
set of orders $k$. We call such $\E$ a {\it generalized equation}. In particular, we can start
with $\bar\E^\infty=J^\infty_n$ and impose a finite number of differential equations $\{\Phi_s=0\}$ without including
prolongations of those. If $\Phi_s$ are algebraic functions and $\bar{\E}$ is algebraic, we call the generalized
equation $\E$ algebraic. $G$-invariance extends straightforwardly.

Consider a sequence of points $a_k\in\E^k$, $\pi_{k,k-1}(a_k)=a_{k-1}$, and let $a_\infty=\lim a_k\in\E^\infty$.
If $a_1=[N]^1_a$ for a $n$-manifold $N\subset M$ then we denote $\tau_a=T_aN$ and $\nu_a=T_aM/T_aN$; they depend
only on $a_1$.
As for usual differential equations $g_k(a_k)=\op{Ker}(d\pi_{k,k-1}:T_{a_k}\E^k\to T_{a_{k-1}}\E^{k-1})$
is called the $k$-symbol of $\E$, and it is naturally identified with a subspace in
$S^k\tau_a^*\otimes \nu_a=\op{Ker}(d\pi_{k,k-1}:T_{a_k}J_n^k\to T_{a_{k-1}}J_n^{k-1})$.
Uniting these we get the symbolic system $g(a_\infty)=\{g_k(a_k)\}\subset S\tau_a^*\otimes\nu_a$.

Let $\delta:S^i\tau_a^*\otimes\nu_a\ot\La^j\tau^*\to S^{i-1}\tau_a^*\otimes\nu_a\ot\La^{j+1}\tau^*$
be the Spencer $\delta$-differential (symbol of the de Rham operator).
When $\E$ is a differential equation and $g$ its symbol, the sequence
 \begin{equation}\label{Spe}
\cdots\to g_{i+1}\ot\La^{j-1}\t^*\stackrel\d\longrightarrow
g_i\ot\La^j\t^*\stackrel\d\longrightarrow
g_{i-1}\ot\La^{j+1}\t^*\stackrel\d\to\cdots
 \end{equation}
is the Spencer complex; its cohomology at the $(i,j)$-term $H^{i,j}(\E;a_\infty)=H^{i,j}(g)$ is called the Spencer $\d$-cohomology group.

For a generalized equation $\E$ the map $\delta$ on $g_i\ot\La^j\tau^*$ may not take values in
$g_{i-1}\ot\La^{j+1}\tau^*$ when $i$ is an order, i.e.\ $\sigma_i\neq\sigma_{i-1}$. However for $i$
exceeding the maximum order the $\d$-differential is well-defined, so if, in addition, $i$ exceeds
the involutivity order of $\bar{\E}$ then $H^{i,*}(g)=0$.

 \begin{theorem}\label{T2}
Consider an algebraic action of a pseudogroup $G$ on an algebraic generalized differential equation $\E\subset J^\infty_n(M)$. Let $G$ act transitively on $M$. Then the Poincar\'e function $P(z)$ of this action is rational of the form \eqref{PR}. It has only one pole at $z=1$ of degree $d\leq n$. Moreover, $P(z)=P(a_\infty;z)$
is locally constant by $a_\infty$ when this point vary in a component of a Zariski open set $\E''\subset\E$.
 \end{theorem}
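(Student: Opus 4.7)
The plan is to reduce Theorem \ref{T2} to Theorem \ref{T1} by exploiting that a generalized equation differs from its underlying formally integrable one only at finitely many orders. Set $k_{\max}=\max_s k_s$. For $k>k_{\max}$ no new relation is imposed at level $k$, so the preimage of $\E^{k-1}$ in $\bar\E^k$ coincides with $\E^k$; consequently the $k$-symbol satisfies $g_k(\E;a_k)=g_k(\bar\E;a_k)$. Thus $g(\E)$ agrees with $g(\bar\E)$ from order $k_{\max}+1$ onward, and above both $k_{\max}$ and the involutivity order of $\bar\E$ we have $H^{i,*}(\E;a_\infty)=0$, so the Spencer analysis proceeds exactly as in the formally integrable case.

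Next I would run the proof of Theorem \ref{T1} verbatim, namely the Lie--Tresse finite generation of \cite{KL}. The algebra $\mathfrak{A}$ of invariants on $\E$ remains generated by finitely many differential invariants $I_i$ and invariant derivations $\nabla_j$, because that argument uses only algebraicity of the action and stabilization/involutivity of the symbolic system past some finite order, both supplied by the first step. This forces $h_k(\E)$ to be polynomial in $k$ for $k\gg1$, and since $g_k(\E)=g_k(\bar\E)$ in that range, with the induced $G$-action on the vertical fibers of $\pi_{k,k-1}$ unchanged, this polynomial coincides with the Hilbert polynomial of $\bar\E$. Hence $P(\E;z)-P(\bar\E;z)$ is an ordinary polynomial, so $P(\E;z)$ inherits the shape \eqref{PR} with a single pole at $z=1$ of degree $d\leq n$; the bound $d\leq n$ again comes from $\dim g_k=O(k^{n-1})$, since the base $M$ is $n$-dimensional.

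For the local-constancy statement, by the Remark after Theorem \ref{T1} there is a finite order $l$ such that separating invariants can be taken rational in jets of order $\leq l$ and polynomial in higher jets. The common regular locus of the finite generating set is a Zariski open $U\subset\E^l$; setting $\E''=\pi_{\infty,l}^{-1}(U)\subset\E$, on each connected component of $\E''$ the codimensions $s_k^\alpha$, hence the whole sequence $h_k^\alpha$ and the Poincar\'e function $P(a_\infty;z)$, are constant. The main obstacle I foresee is the bookkeeping in transferring \cite{KL}: one must verify that every appeal there to formal integrability of $\E$ is already covered by the symbol agreement and the vanishing of Spencer cohomology past $k_{\max}$, with no hidden use of $\E^k$ being the full prolongation of $\E^{k-1}$ at the low orders $k\leq k_{\max}$. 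A secondary subtlety is fixing what ``Zariski open in $\E$'' means in the projective-limit setup, namely as the pullback of a Zariski open subset from some finite jet level.
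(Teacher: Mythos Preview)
Your strategy---observe that $g_k(\E)=g_k(\bar\E)$ for $k>k_{\max}$ and then invoke Lie--Tresse as a black box---differs from the paper's route and has two weak points. First, the claim that the Hilbert polynomial of $\E$ coincides with that of $\bar\E$ is false in general: for a fixed $a_\infty\in\E$ and $k>k_{\max}$ one indeed has $h_k(\E;a_\infty)=h_k(\bar\E;a_\infty)$, but a generic point of $\E$ is typically \emph{not} generic in $\bar\E$, so this need not equal the generic $h_k(\bar\E)$. The paper's own example makes this concrete: with $\bar\E=J^\infty(\R^2,\R)$ and $\E=\Sigma_1=\{u_{10}=0\}$ (no prolongations), the generic Poincar\'e functions are $0$ and $z/(1-z)$ respectively. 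This step is in fact superfluous---once $h_k(\E)$ is eventually polynomial, $P(\E;z)$ already has form \eqref{PR}---but as written the deduction is incorrect.

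Second, and more substantively, you assert that the Lie--Tresse argument of \cite{KL} runs verbatim for generalized equations, which is precisely the ``main obstacle'' you flag and do not resolve. The paper deliberately sidesteps this: rather than invoke finite generation of invariants, it works directly at the symbolic level. It introduces the orbit symbol $\varpi_k=\op{Ker}(d\pi_{k,k-1}:\Delta_k\to\Delta_{k-1})$ and the quotient symbol $\mathfrak{d}_k=\op{Ker}(T\mathcal{Q}_k\to T\mathcal{Q}_{k-1})$, and observes that Proposition~10, Theorem~11 and Corollary~12 of \cite{KL} use only surjectivity of $\pi_{k,k-1}:\E^k\to\E^{k-1}$ and algebraicity of the action---never formal integrability of $\E$. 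This yields $H^{i,j}(\varpi)=0$ for $i\ge l$ on a Zariski open $\E''=\pi_{\infty,l}^{-1}(\E''_l)$. The short exact sequences $0\to\varpi_k\to g_k\to\mathfrak{d}_k\to0$ together with $H^{i,*}(g)=0$ for large $i$ (your first paragraph) then give $H^{i,*}(\mathfrak{d})=0$ by the snake lemma, whence $h_k=\dim\mathfrak{d}_k$ is polynomial for $k\gg0$. The local constancy on $\E''$ is a byproduct of this construction. The paper remarks afterwards that Lie--Tresse does also generalize to this setting, but that is a comment, not the proof.
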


 \begin{proof}
Note that we allow $\E$ to be reducible. In this case we restrict to one of its finitely many components.
Thus the claim follows from an irreducible case, on which we now concentrate.

Let $\Delta_k(a_k)=T_{a_k}(G^k\cdot a_k)=\{X^{(k)}_{a_k}:X\in\mathcal{G}\}$ be the tangent differential system.
For a point $a_\infty\in\E^\infty$ consider the subspace
 $$
\varpi_k=\op{Ker}\bigl(d\pi_{k,k-1}:\Delta_k(a_k)\to\Delta_{k-1}(a_{k-1})\bigr)\subset S^k\tau_a^*\otimes \nu_a.
 $$

The main observation is that the proofs of Proposition 10 and Theorem 11 of \cite{KL} use only
the surjectivity of $\pi_{k,k-1}:\E^k\to\E^{k-1}$ and algebraicity of the action.
Thus we we can apply Corollary 12 of loc.cit. to conclude that there exists a natural $l$ and a Zariski open subset
$\E''=\pi_{\infty,l}^{-1}(\E''_l)\subset\E$ such that for all $i\ge l,j\ge0$ and $a_\infty\in\E''$ the sequence
 \begin{equation}\label{Spe}
\cdots\to \varpi_{i+1}\ot\La^{j-1}\t^*\stackrel\d\longrightarrow
\varpi_i\ot\La^j\t^*\stackrel\d\longrightarrow
\varpi_{i-1}\ot\La^{j+1}\t^*\stackrel\d\to\cdots
 \end{equation}
is well-defined and is exact: $H^{i,j}(\varpi)=0$.

Denote $\mathfrak{d}_k=\op{Ker}(T\mathcal{Q}_k\to T\mathcal{Q}_{k-1})$, where $\mathcal{Q}_k=\E^k/G$ is the
rational quotient. Then the exact sequences
 $$
0\to \varpi_k\longrightarrow g_k \longrightarrow \mathfrak{d}_k\to 0
 $$
and the corresponding Spencer $\d$-complexes unite into a bi-complex, which by the snake lemma implies that
$H^{i,j+1}(\varpi)=H^{i+1,j}(\mathfrak{d})$ for large $i$, in the range where $H^{i,j+1}(g)=H^{i+1,j}(g)=0$
(we can assume $H^{i,*}(g)=0$ for $i\ge l$).
Hence $H^{i,j}(\mathfrak{d})=0$ for $i\gg0$, cf.\ \cite[Theorem 16]{KL}.
Thus $\dim\mathfrak{d}_k$ grows polynomially for $k\gg0$ and this implies that $h_k$  grows polynomially in the same
range, whence the claim.
 \end{proof}

Let us note that we have not used Lie-Tresse theorem for the generalized equation $\E$ in this proof, but it
generalizes to this case as well.

Now we derive a version of Arnold's strong conjecture.

 \begin{theorem}\label{T3}
Let an algebraic pseudogroup $G$ act transitively on a manifold $M$ and its prolonged action preserve
an algebraic differential equation $\E\subset J^\infty_n(M)$.
Then there exist a subset $\Sigma\subset\E$ of $\op{codim}\Sigma=\infty$
and an algebraic stratification $\E\setminus\Sigma=\cup_\alpha\E_\alpha$ such that for every $\alpha$
the Poincar\'e function $P_\alpha(z)=P(a_\infty;z)$ is rational with the only pole at $z=1$ of degree $d\leq n$.
This $P_\alpha$ depends only on $\alpha$ and not on $a_\infty\in\E_\alpha$.
 \end{theorem}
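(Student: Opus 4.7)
The plan is to iterate Theorem~\ref{T2} on the descending tower of algebraic subvarieties obtained by discarding, at each stage, the Zariski open ``regular'' stratum produced by the theorem, treating each resulting closed subvariety as an algebraic generalized differential equation in the sense of Section~\ref{S1}.

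First, apply Theorem~\ref{T2} to each irreducible component of $\E$ (finitely many by algebraicity). This yields a $G$-invariant Zariski open $\E^{(1)}\subset\E$ on which $P(a_\infty;z)$ is locally constant, rational, and of the form \eqref{PR} with $d\le n$; its connected components form the first layer of strata $\E_\alpha$. The complement $\E\setminus\E^{(1)}$ is $G$-invariant, since the regularity conditions from Theorem~\ref{T2} that define $\E^{(1)}$ are themselves $G$-invariant. Decompose this complement into finitely many irreducible components $\E^\gamma$; each is $G$-invariant because $G$ is Zariski connected. Each $\E^\gamma$ is an algebraic generalized equation: take the ambient $\bar\E=\E$ and adjoin the finitely many algebraic relations cutting out $\E^\gamma$ at their respective orders, then pass to a further $G$-invariant Zariski open subset on which the projections $\pi_{k,k-1}$ become submersions. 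Theorem~\ref{T2} now applies to $\E^\gamma$, producing the next layer of strata together with a residual exceptional subvariety. Iterating yields a countable family of locally closed $G$-invariant algebraic strata $\E_\alpha$, on each of which $P_\alpha(z)=P(a_\infty;z)$ is the same rational function of the asserted form.

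It remains to show that the residual set $\Sigma=\E\setminus\bigcup_\alpha\E_\alpha$ has infinite codimension. Fix $k$ and project the construction to $\E^k$. At each iteration step, Theorem~\ref{T2} removes a nonempty Zariski open subvariety from the current exceptional locus at some finite order $l\ge k$, whose image in $\E^k$ is a closed subvariety strictly smaller than the one from the previous step as soon as $l$ is large enough relative to $k$. By noetherianity of the finite-dimensional variety $\E^k$, this descending chain of closed subvarieties stabilizes after finitely many refinements, and its final codimension grows without bound as $k\to\infty$. Hence $\op{codim}_\E\Sigma=\infty$. The main obstacle is precisely this codimension bookkeeping: one must rule out the degenerate scenario in which the iteration merely peels off higher-jet information while leaving $\pi_{\infty,k}(\Sigma)$ unchanged for a fixed $k$. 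This is overcome by the ``locally constant'' clause of Theorem~\ref{T2}, which lets the refinement order $l$ be chosen uniformly on each stratum and, combined with noetherianity of each $\E^k$, forces strict codimension growth of the residual locus in every finite jet space.
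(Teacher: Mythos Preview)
Your strategy matches the paper's exactly: iterate Theorem~\ref{T2}, peeling off a Zariski open ``regular'' stratum at each stage and treating the complement as a $G$-invariant algebraic generalized equation. The paper's own proof is equally terse and simply says ``continuing in the same way for at most countably many steps, we conclude the claim.'' Your added care about irreducible components and $G$-invariance of each component (via Zariski connectedness of $G$) is welcome and correct.

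The gap is in your last paragraph. Noetherianity of $\E^k$ tells you the projected chain $\pi_{\infty,k}(\Sigma_i)$ \emph{stabilizes}; it does not tell you its codimension grows. Indeed it typically stabilizes at something nonempty of fixed positive codimension (think of $\Sigma_i=\{u_1=\dots=u_i=0\}$ in $J^\infty(\R,\R)$: the projection to $J^1$ is $\{u_1=0\}$ for all $i\ge1$). Your invocation of the ``locally constant'' clause of Theorem~\ref{T2} does not address this: local constancy of $P$ on the open stratum says nothing about the geometry of the complement.

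The correct (and simpler) argument is the one implicit in the paper. Work irreducible-component-by-component as you already do. At step $i$, Theorem~\ref{T2} applied to an irreducible component $\Sigma_{i-1}^\gamma$ removes a nonempty Zariski open subset at some finite level $l_i^\gamma$, so the remaining closed set has codimension at least $1$ in $\Sigma_{i-1}^\gamma$ at level $l_i^\gamma$, and hence (since a generalized equation is pulled back above its defining level and $\E$ is regular) at every level $k\ge l_i^\gamma$. Iterating, each component surviving $i$ steps has codimension $\ge i$ in $\E^k$ for $k\ge L_i:=\max_\gamma l_i^\gamma$ (a finite maximum, since there are finitely many components at each stage). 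Thus $\op{codim}(\Sigma_i^k\subset\E^k)\ge i$ for $k\ge L_i$, and since $\Sigma\subset\Sigma_i$ for every $i$, we get $\op{codim}(\Sigma^k\subset\E^k)\to\infty$. Replace your noetherianity paragraph with this monotone codimension count and the proof is complete.
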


Note that a differential equation (taken together with all prolongations)
$\E$ is itself of infinite codimension in $J^\infty_n(M)$ unless $\E=J^\infty_n(M)$,
but codimension of $\Sigma$ is measured in $\E$.

 \begin{proof}
Let us begin with $\E$. By Theorem \ref{T2} the Poincar\'e function $P(z)=P(a_\infty;z)$
is of the required type as long as $a_\infty$ belongs to a Zariski open set $\E''$. The complement
$\tilde\E=\E\setminus\E''$ is a Zariski closed subset of $\E$. If it is of infinite codimension, we are done.
Otherwise it is a stratified algebraic generalized equation invariant under the action of $G$, and the
assumptions of Theorem \ref{T2} are satisfied (in particular, the action is transitive on the base).
It can happen that in addition to equalities, specifying $\E\subset\bar{\E}$ in the
preceding proof, we introduce inequalities, but the conclusion will not suffer from this.
Thus we apply Theorem \ref{T2} again and obtain rationality of the Poincar\'e function on a Zariski open
subset $\tilde\E''$ of $\tilde\E$. Continuing in the same way for at most countably many steps, we conclude the claim.
 \end{proof}

Let us give an example of a situation, where assumptions of the previous theorem fail and the conclusion is different.
Consider vector fields on a manifold $N$ as sections of its tangent bundle.
Here $M=TN$, $n=\dim N$, and we consider only jets of sections of $\pi:E=M\to N$,
restricting to $J^\infty(N,TN)\subset J^\infty_n(M)$. The group $G=\op{Diff}_{\text{loc}}(N)$ naturally (and
algebraically) acts on this jet-space. However $G$ does not act transitively on $M$: there is an open
orbit $U=TN\setminus N$ and the zero section $0_N\equiv N$. In the preimage
$\pi_{\infty,0}^{-1}(U)$ the Poincar\'e function is rational. In fact, it equals $P(z)=0$.

However for the points $a_\infty$ with $a=\pi_{\infty,0}(a_\infty)\in 0_N$ the normal form theory applies, and
$P(a_\infty;z)$ depends essentially on the jet $a_\infty$. In this case, Arnold's conjecture is plausible, but
the Poincar\'e function varies with $a_\infty$: in non-resonant case (depends on $a_1$ only) the vector field
is formally linearizable and so $P(z)=nz$, while the resonant formal normal form can lead to
poles at other points on the unit circle $|z|=1$. We will discuss this phenomenon closer in the Conclusion.

\subsection{An example of computation}
Consider the action
 $$
g:(x,y,u)\mapsto(X(x,y),y+c_1,u+c_2)
 $$
of the pseudogroup $G=\{g\}$ on $M=\R^3(x,y,u)=\R^2(x,y)\times\R^1(u)$ and prolong it
to $J^\infty(\R^2,\R)=\R^\infty(x,y,\{u_{i,j}\}_{i,j\ge0})$,
where $u_{i,j}$ is the jet-coordinate corresponding to $D_x^iD_y^ju(x,y)$.
The Lie algebra sheaf of $G$ is $\mathcal{G}=\langle f(x,y)\p_x,\p_y,\p_u\rangle$.
Note that the action is algebraic and transitive on the base, so all assumptions are satisfied.

The isotropy subalgebra in $\mathcal{G}$ of the point $a=0$ in $M$ is $\mathcal{G}_a=\{X=f(x,y)\p_x:f(0,0)=0\}$.
Note that prolongation of such $X$ to $J^\infty$ is
 \begin{equation}\label{Xprol}
X^{(\infty)}_a=-\sum_{i+j>0}D_x^iD_y^j(f(x,y)u_{10})\p_{u_{i,j}}.
 \end{equation}
This action has a unique open orbit -- the complement of the stratum $\Sigma_1=\{u_{10}=0\}$, i.e.\ $P(z)=0$ on
$J^\infty\setminus\Sigma_1$. Indeed, the prolonged field to $k$-jets is
 $$
X^{(k)}_a=-\sum_{0<i+j\le k}(f_{i,j}u_{10}+\dots)\p_{u_{i,j}},
 $$
where dots denote the lower jets of the group parameter $f$. Varying these jets makes
the coefficients of $\p_{u_{i,j}}$ arbitrary provided $u_{10}\neq0$. Thus the orbit in $J^k$ is open,
and all differential invariants are constants.

Consider the singular stratum $\Sigma_1=\{u_{10}=0\}$ (codimension 1, no prolongations).
In this case
 \begin{equation}\label{2j}
X^{(k)}_a=-\sum_{1<i+j\le k}(if_{i-1,j}u_{20}+jf_{i,j-1}u_{11}+\dots)\p_{u_{i,j}},
 \end{equation}
where dots denote the lower jets of $f$. Counting the group parameters we see that if $u_{20}\neq0$
there is one pure order differential invariant in every order: $h_k=1$ for $k>0$. The first invariants are:
 $$
I_1=u_{01},\ I_2=u_{02}-\frac{u_{11}^2}{u_{20}},\
I_3=u_{03}-\frac{u_{11}^3}{u_{20}^3}u_{30}+3\frac{u_{11}^2}{u_{20}^2}u_{21}-3\frac{u_{11}}{u_{20}}u_{12}.
 $$

The next singular stratum is $\Sigma_2=\{u_{10}=0,u_{20}=0\}$. In this case a similar argument implies
that provided $u_{11}\neq0$ there is one pure order differential invariant $h_k=1$ in every order $0<k\neq2$,
and for $k=2$ we have $h_2=0$.
The first invariants are:
 $$
I_1=u_{01},\ I_3=\frac{u_{30}}{u_{11}^3},\
I_4=\frac{u_{40}}{u_{11}^4}-6\frac{u_{30}u_{21}}{u_{11}^5}+3\frac{u_{02}u_{30}^2}{u_{11}^6}.
 $$

The next singular stratum is $\Sigma_3=\{u_{10}=0,u_{20}=0,u_{11}=0\}$. Here the same argument implies
that provided $u_{30}\neq0$ we have $h_1=h_2=1$ and $h_k=2$ for $k>2$, so that we obtain two new invariants
in every order starting from order three. The first invariants are:
 $$
I_1=u_{01},\ I_2=u_{02},\
I_{3a}=u_{03}+2\frac{u_{21}^3}{u_{30}^2}-3\frac{u_{21}u_{12}}{u_{30}},\
I_{3b}=\frac{(u_{30}u_{12}-u_{21}^2)^3}{u_{30}^4}.
 $$

In the same way we obtain all further singular strata $\Sigma_4=\{u_{10}=0,u_{20}=0,u_{11}=0,u_{30}=0\}$,
$\Sigma_5=\{u_{10}=0,u_{20}=0,u_{11}=0,u_{30}=0,u_{21}=0\}$,
$\Sigma_6=\{u_{10}=0,u_{20}=0,u_{11}=0,u_{30}=0,u_{21}=0,u_{12}=0\}$, etc.
In the limit we get the stratum $\Sigma_\infty=\{u_{1+i,j}=0:i,j\ge0\}$, which is the
infinitely prolonged equation $\{u_x=0\}$. In this latter stratum the group reduces to three translations on the base,
so all jet-coordinates $u_{01},u_{02},u_{03},\dots$ are differential invariants. We summarize our computations in the following table:

\begin{center}
 \begin{tabular}{|l|l|}
\hline
$\Sigma_0\setminus\Sigma_1$ & $P(z)=0$ \\
$\Sigma_1\setminus\Sigma_2$ & $P(z)=z+z^2+z^3+\dots=\frac{z}{1-z}$ \\
$\Sigma_2\setminus\Sigma_3$ & $P(z)=z+z^3+z^4+z^5+\dots=\frac{z-z^2+z^3}{1-z}$ \\
$\Sigma_3\setminus\Sigma_4$ & $P(z)=z+z^2+2z^3+2z^4+2z^5+\dots=\frac{z+z^3}{1-z}$ \\
$\Sigma_4\setminus\Sigma_5$ & $P(z)=z+z^2+z^3+2z^4+2z^5+2z^6+\dots=\frac{z+z^4}{1-z}$ \\
$\Sigma_5\setminus\Sigma_6$ & $P(z)=z+z^2+z^4+z^5+z^6\dots=\frac{z-z^3+z^4}{1-z}$ \\
$\Sigma_6\setminus\Sigma_7$ & $P(z)=z+z^2+z^3+3z^4+3z^5+3z^6+\dots=\frac{z+2z^4}{1-z}$ \\
\quad\dots & \qquad \dots \qquad \dots \qquad \dots \\
\quad$\Sigma_\infty$ & $P(z)=z+z^2+z^3+z^4+z^5+\dots=\frac{z}{1-z}$ \\
\hline
 \end{tabular}
\end{center}

The orbit foliation can have complicated singularities. Let us demonstrate this on example of the stratum
$\Sigma_1$. For a point $a$ in it consider $\R^3=\pi_{2,1}^{-1}(a_1)$, with coordinates
$r=u_{20}$, $s=u_{11}$, $t=u_{02}$. From formula \eqref{2j} (in this case there will be no dots and summation
is by $i+j=2$), the Lie algebra sheaf $\mathcal{G}$ on it is given by two vector fields
$X=2r\p_r+s\p_s$, $Y=r\p_s+2s\p_t$ (coefficients of $f_{10}$ and $f_{01}$ respectively).

The distribution $\langle X,Y\rangle$ is involutive and its foliation is shown below.
The stratification is as follows: $r\neq0$ (invariant $I=t-\frac{s^2}t$), $r=0,s\neq0$ (no invariants),
$r=s=0$ (invariant $I=t$).

 \vspace{-0.2cm}
 \begin{center}
\includegraphics[height=5cm,width=9cm]{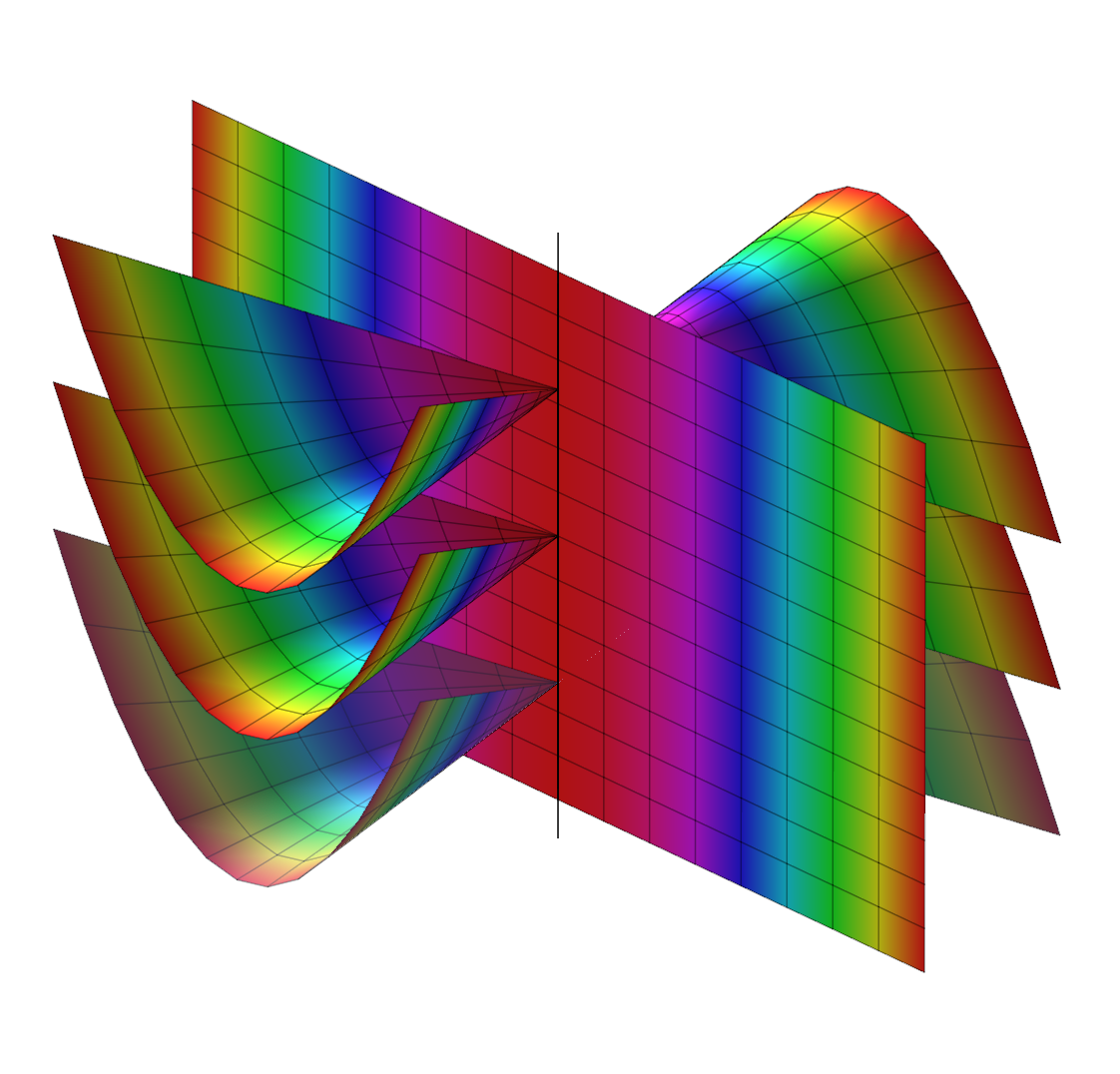}
 \end{center}

\section{On computation of the Poincar\'e function}\label{S2}

In this and next sections we consider a natural bundle $\pi:E\to M$, and $E$
(or an open subset of it) will play the role of $M$ from the previous section.
From now on $M$ will be the base of the bundle $\pi$, and $G\subseteqq\op{Diff}_{\text{loc}}(M)$ a
pseudogroup on $M$. In what follows an equality will be our primary interest (the other cases will be
explicitly specified), so we specify the method to this case. Note that, by naturality of $\pi$, the action
of $G$ lifts from $M$ to $E$, and hence $G\subsetneqq\op{Diff}_{\text{loc}}(E)$.

Denote by $\DD^k_a=\{[\vp]_a^k:\vp\in G=\op{Diff}_{\text{loc}}(M),\vp(a)=a\}$ the so-called differential group
of order $k$ on $M$ at $a$. If the above lift involves $r$ differentiations, i.e.\ the $G$-action on $E$ has order $r$,
then $\DD_a^{k+r}$ acts on $J^k_a\pi$. In almost all our examples the action will be transitive on $E=J^0\pi$
(the opposite will be explicitly noted). Moreover the lift from $M$ to $E$ and further prolongations will keep
the action algebraic, so the assumptions of the previous section hold.

In some cases, we deal with pure jets, but in some others a differential equation is given, restricting the action
to $\E^k_a\subseteq J^k_a\pi$ (un-restricted case corresponds to the equality). Thus, abbreviating $T_a=T_aM$,
the fiber $\Delta_{k+r}\simeq S^{k+r}T^*_a\ot T_a$ of the projection $\DD_a^{k+r}\to\DD_a^{k+r-1}$ over
the unit (identity) acts on the symbol $g_k(a_k)=T_{a_k}\E^k\to T_{a_{k-1}}\E^{k-1}$;
here $a_k$ is a sequence of jets in $\E^k_a$ with projective limit $a_\infty$, i.e.\ $\pi_{k,k-1}a_k=a_{k-1}$.

Note that if $X$ is a vector field vanishing to order $k+r$ at $a$ and $s$ is a (local) section of $\pi$, then
$L_X(s)$ vanishes to order $k$ at $a$. Denoting $\lambda=[X]_a^{k+r}\in\Delta_{k+r}$ the corresponding jet
and $\rho:S^rT^*_a\ot T_a\to T_{a_0}E_a$ 
the symbol of the action, we have for a section with $s(a)=a_0$: $[L_Xs]^k_a=\zeta^k_{a_0}(\lambda)$,
where $\zeta^k_{a_0}$ is the composition of the canonical splitting map $\delta_k$ and the prolongation of the symbol map,
 $$
\zeta^k_{a_0}:S^{k+r}T^*_a\ot T_a \stackrel{\delta_k}\longrightarrow S^kT^*_a\ot S^rT^*_a\ot T_a
\stackrel{1\ot\rho}\longrightarrow S^kT^*_a\ot T_{a_0}E_a.
 $$
This implies the following. Denote $\g_r=\op{Ker}(\rho:S^rT^*_a\ot T_a\to T_{a_0}E_a)$ and let
$\g_{k+r}=\g_r^{(k)}=S^kT^*_a\ot\g_r\cap S^{k+r}T^*_a\ot T_a$ be its prolongation
(to complete this symbolic system, we let $\g_i=S^iT^*_a\ot T_a$ for $0\leq i<r$).

 \begin{prop}\label{pLY}
The tangent space to the orbit $\Delta_{k+r}(a_k)\subset T_{a_k}\E^k$ is equal to the image $\op{Im}(\zeta^k_{a_0})$.
Moreover, $\op{Ker}(\zeta^k_{a_0})=\g_{k+r}$ and the normal space to the orbit is $\op{CoKer}(\zeta^k_{a_0})$. \qed
 \end{prop}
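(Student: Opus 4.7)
The plan is to read off each claim from the formula $[L_X s]^k_a = \zeta^k_{a_0}(\lambda)$ (for $\lambda=[X]^{k+r}_a$) that is already recorded before the statement, together with the standard symbol prolongation formalism.

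First I would identify the infinitesimal action. The fiber $\Delta_{k+r}$ of $\DD^{k+r}_a\to\DD^{k+r-1}_a$ over the identity is an abelian vector group with Lie algebra canonically $S^{k+r}T^*_a\ot T_a$, an element being the $(k+r)$-jet at $a$ of a vector field $X$ vanishing to order $k+r$. Differentiating the orbit map $g\mapsto g\cdot a_k$ at the unit replaces the $G$-translation on sections by the Lie derivative, so it sends $\lambda$ to $[L_X s]^k_a=\zeta^k_{a_0}(\lambda)$. Since the image of this differential is by definition the tangent space to the orbit $\Delta_{k+r}(a_k)$ at $a_k$, the first claim follows.

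Next I would compute $\op{Ker}(\zeta^k_{a_0})$. In characteristic zero the splitting $\delta_k$ embeds $S^{k+r}T^*_a\ot T_a$ into $S^kT^*_a\ot S^rT^*_a\ot T_a$, and $\zeta^k_{a_0}=(1\ot\rho)\circ\delta_k$. Because $1\ot\rho$ acts only on the two rightmost tensor factors with kernel $S^kT^*_a\ot\g_r$, one gets $\op{Ker}(\zeta^k_{a_0})=(S^kT^*_a\ot\g_r)\cap(S^{k+r}T^*_a\ot T_a)$, which is exactly the prolongation $\g_r^{(k)}=\g_{k+r}$. The third claim is then a formality: the normal space to the orbit inside its ambient symbol space is by definition the quotient of the ambient by the image of $\zeta^k_{a_0}$, hence $\op{CoKer}(\zeta^k_{a_0})$.

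The only substantive step is the identification of the infinitesimal action with $\zeta^k_{a_0}$, which relies on the fact that when $X$ vanishes to order $k+r$ at $a$ the $k$-jet of $L_X s$ depends only on the $(k+r)$-jet of $X$ and is given by the canonical composition of the splitting with the symbol of the action; once this is in hand, all remaining content of the proposition is a direct unwinding of prolongation definitions, and the \qed placed immediately after the statement by the author reflects exactly that.
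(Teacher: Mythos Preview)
Your argument is correct and is exactly the unwinding the author intends: the paper gives no proof beyond the \qed, relying entirely on the preceding identity $[L_Xs]^k_a=\zeta^k_{a_0}(\lambda)$ and the definition of $\g_{k+r}=\g_r^{(k)}$, which you have spelled out accurately. There is nothing to add.
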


For $r=1$ the map $\delta=\delta_1$ is the usual Spencer differential and the above prolongation is
the standard Sternberg prolongation of first order structures.
Note that in presence of equation $\E$ the size of
kernel and cokernel of the map $\zeta^k_{a_0}$ may vary, we will comment in such cases.

In the case of Riemannian structures, when $r=1$ and $\g=\mathfrak{so}(n)$,
the above proposition was proved in \cite{LY}. In this case $\g_i=0$ for $i\ge2$.
In the case of symmetric connections (considered in details in the next section), $r=2$ and the symbol
$\rho:S^2T^*_a\ot T_a\to S^2T^*_a\ot T_a$ is an isomorphism, whence $\g_i=0$ for $i\ge2$.

Denote by $\op{St}^{k+r}_{a_k}\subset\DD^{k+r}_a$ the connected component of unity in the stabilizer of $a_k\in\E^k_a$.
We have: $\op{Ker}(d\pi_{k+r,k+r-1}|\op{St}^{k+r}_{a_k})=\op{St}^{k+r}_{a_k}\cap\Delta^{k+r}=\op{Ker}(\zeta^k_{a_0})$.
Then Proposition \ref{pLY} implies:

 \begin{cor}
Assume that equation $\E$ is regular.
If $\g$ is of finite type and $\g_i=0$ for $i\ge l$, then $\op{Ker}(\zeta^k_{a_0})=0$ for $k\ge l-r$.
Consequently, the projection $\pi_{k+r,k+r-1}:\op{St}^{k+r}_{a_k}\to \op{St}^{k+r-1}_{a_{k-1}}$ is injective
for such $k$. \qed
 \end{cor}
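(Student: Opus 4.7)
The plan is to derive this corollary as an essentially immediate consequence of Proposition \ref{pLY}, with the finite-type hypothesis doing all the real work. The first assertion is simply a translation: Proposition \ref{pLY} identifies $\op{Ker}(\zeta^k_{a_0})$ with the prolongation $\g_{k+r}=\g_r^{(k)}\subset S^{k+r}T^*_a\ot T_a$, while the finite-type condition states precisely that $\g_i=0$ for $i\ge l$. Hence $\g_{k+r}=0$ whenever $k+r\ge l$, i.e.\ for $k\ge l-r$.

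For the second assertion, I would invoke the chain of identifications recorded in the paragraph preceding the corollary:
\[
\op{Ker}\bigl(d\pi_{k+r,k+r-1}|_{\op{St}^{k+r}_{a_k}}\bigr)=\op{St}^{k+r}_{a_k}\cap\Delta^{k+r}=\op{Ker}(\zeta^k_{a_0}).
\]
The first equality holds because $\Delta^{k+r}$ is by definition the kernel of the differential-group projection $\DD_a^{k+r}\to\DD_a^{k+r-1}$, while the second says that an element of $\Delta^{k+r}\simeq S^{k+r}T^*_a\ot T_a$ fixes $a_k$ precisely when its image in the tangent orbit is zero, which is exactly the content of Proposition \ref{pLY}. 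By the first assertion, this common kernel vanishes for $k\ge l-r$.

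The only subtlety, which I regard as the main (though minor) obstacle, lies in passing from the vanishing of a Lie-algebra-level kernel to group-level injectivity of $\pi_{k+r,k+r-1}|_{\op{St}^{k+r}_{a_k}}$. This is settled by the observation that $\Delta^{k+r}$ is a connected abelian Lie group --- a vector group --- and that $\op{St}^{k+r}_{a_k}\cap\Delta^{k+r}$ is a closed algebraic subgroup of it, hence itself a (connected) vector subgroup. A connected Lie group whose Lie algebra is trivial must itself be trivial, so injectivity follows. The regularity hypothesis on $\E$ is invoked only to guarantee that the symbol $g_k(a_k)$, the orbit $\Delta_{k+r}(a_k)$, and the stabilizer $\op{St}^{k+r}_{a_k}$ are smooth of the expected dimension at every order, so that Proposition \ref{pLY} applies pointwise along the tower $\{a_k\}$.
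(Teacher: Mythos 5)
Your proof is correct and follows essentially the same route the paper intends: the corollary carries no separate argument precisely because it is the immediate combination of Proposition \ref{pLY} (which identifies $\op{Ker}(\zeta^k_{a_0})$ with the prolongation $\g_{k+r}$, vanishing for $k+r\ge l$) with the chain of kernel identifications displayed in the paragraph preceding the statement. Your extra observation that the algebraic subgroup $\op{St}^{k+r}_{a_k}\cap\Delta^{k+r}$ of the vector group $\Delta^{k+r}$ is a linear subspace, so that vanishing of its Lie algebra forces triviality of the group, is a welcome bit of care that the paper leaves implicit.
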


The action is {\it locally free\/} from the jet-level $l$ if $\op{St}^{k+r}_{a_k}=0$ for $k\ge l$ and Zariski-generic
$a_k$. Note that in all cases of finite type we consider in the next section, the stabilizer will be resolved in
a finite number of prolongations. This has the following explanation. Since the Lie equation is of finite type
at generic jet of the geometric structure, stabilization of $\op{St}^{k+r}_{a_k}$ at non-zero space would
imply non-trivial local symmetry of the considered structure, while in all our examples generic geometric
structures will have only trivial local symmetries.

\section{A panorama of examples}\label{S3}

Below we compute the Poincar\'e function of most popular geometric structures, whose moduli arise in applications.
Some of these formulae have been known before and we provide a reference, for some others only the orbit
dimensions have been known and we derive a compact formula for the Poincar\'e series
(usage of Maple is acknowledged at that stage). We also correct errors in several previous works on the subject,
and provide short computations based on prolongation technique of the previous section. Finally,
we add new examples: metric connections, Weyl conformal and almost complex structures. The latter case is
especially interesting as the first non-trivial structure of infinite type with novel effects in
local moduli count.

We denote $n=\dim M$ in all computations in this section, unless otherwise specified
(dimension $d$ if $M$ will be indicated as $M^d$).

\subsection{Second order ODE modulo point transformations}\label{S31}

This is one of the most known classical examples, where differential invariants have been computed and counted.
Our only contribution here are the formulae for the Poincar\'e function.

\subsubsection{{\bf General second order ODE}}\label{1a}

Consider differential equations $y''=f(x,y,y')$ given by a function $f$ of three variables. The action
of the pseudogroup $G=\op{Diff}_{\text{loc}}(\R^2)$ on the space of independent and dependent variables $(x,y)$
prolongs to the space $\R^4(x,y,y',y'')$. In this case $M=\R^2$ and $E=\R^4$.
Denoting $y'=p$, $y''=u$, we get a transitive algebraic action
on the space $J^0=\R^3(x,y,p)\times\R^1(u)$, which prolongs to the action on the space $J^\infty(\R^3)$ of jets of functions $u=f(x,y,p)$.

The problem of differential invariants of this action was initiated by S.\ Lie \cite{Lie1},
and all relative invariants were found by A.\ Tresse \cite{Tr2}. The absolute differential invariants
were derived and counted in \cite{K1}: $h_k=0$ for $k\leq4$, $h_5=3$ and $h_k=\binom{k}{2}-4$ for $k>5$.
Therefore we obtain
 $$
P(z)=3z^5+\sum_{k=5}^\infty\Bigl(\frac{k(k-1)}2-4\Bigr)z^k=\frac{z^5(3+2z-7z^2+3z^3)}{(1-z)^3}.
 $$
There are no differential invariants of order up to four: $G$ acts transitively on $J^3$, and has a
Zariski open orbit in $J^4$ -- its complement is a reducible algebraic variety $I\cdot H=0$, where
$I,H$ are basic relative invariants. The numbers $s_k=\sum_{i\leq k}h_i$ correspond to codimension of the
orbit in the domain $I\neq0,H\neq0$ of $k$-jets.

\subsubsection{{\bf Second order ODE cubic in $y'$}}\label{1b}

The singular stratum given by $H=0$ is dual by E.\ Cartan \cite{C} to $I=0$, so it is enough to consider only
the latter. This relative invariant has a simple formula $I=\frac{\p^4f}{\p p^4}$, so $I=0$ is equivalent to
cubic dependence of the right-hand side of the ODE on $p=y'$:
$y''=\alpha_0(x,y)+\alpha_1(x,y)y'+\alpha_2(x,y)(y')^2+\alpha_3(x,y)(y')^3$.
Such ODEs are equivalent to projective connections in 2D \cite{C}.

The group $G$ acts on $J^0=\R^2(x,y)\times\R^4(\alpha_0,\alpha_1,\alpha_2,\alpha_3)$
and the action prolongs to $J^\infty(\R^2,\R^4)$.
This action is transitive in 2-jets, and transitive outside the stratum $F_3=0$ in 3-jets, where $F_3$ is the Liouville
relative invariant \cite{Lio}, see also \cite{K1}. Differential invariants of this action were counted in
\cite{Tr1,Y}: $h_k=0$ for $k<4$, $h_k=2(k-1)$ for $k\ge4$. Therefore we obtain
 $$
P(z)=\sum_{k=4}^\infty2(k-1)z^k=\frac{2z^4(3-2z)}{(1-z)^2}.
 $$

\subsubsection{{\bf Second order ODEs of special Lie form}}\label{1c}

The following class of equations was introduced by S.\ Lie: $y''=f(x,y)$.
It includes all Painlev\'e transcendents (after a point transformation \cite{Bor})
and so is of special importance. 
The point transformation pseudogroup leaving the class
invariant is $(x,y)\mapsto(X(x),cX'(x)^{1/2}y+Y(x))$, it naturally extends to the space $J^0=\R^2(x,y)\times\R(f)$.
Differential invariants of this action were computed by P.\ Bibikov \cite{Bi}. In particular,
$h_k=0$ for $k\leq3$, $h_4=2$, and $h_k=k-1$ for $k\ge5$. This implies the formula
 $$
P(z)=2z^4+\sum_{k=5}^\infty(k-1)z^k=\frac{z^4(2-z^2)}{(1-z)^2}.
 $$

\subsection{Metric and related structures}\label{S32}

Consider $E=S^2T^*M$. The group $G$ acts in the fiber $S^2T^*_xM$ through the general linear group $GL(T_xM)$.
The action is not transitive, and degenerate quadrics form a singular stratum $\Sigma_x$. The complement
to $\Sigma=\cup_x\Sigma_x$ in $E$ is one orbit of $G=\op{Diff}_{\text{loc}}(M)$ over $\C$, while over $\R$
it splits into a finite union of orbits numerated by the index. Resetting $E$ to be one of those domains
we get an algebraic fiber bundle on which $G$ acts transitively. Sections of it correspond to (pseudo-)Riemannian
metrics $g$. Note that the number of differential invariants is independent of the index of $g$ and so the Poinar\'e
function is the same for pseudo-Riemannian metrics as for Riemannian ones.

Below we study a bundle of metrics or a differential equation in it; we also impose additional a
complex or tri-complex structure, constrained by the known relations. The bundle $E$ is properly modified.

\subsubsection{{\bf Riemannian metrics on $M^n$}}\label{2a}

Local scalar differential invariants of metrics for $n=2$ were studied by K. Zorawski \cite{Z}, and their count is:
$h_k=0$ for $k<2$, $h_2=h_3=1$, $h_k=k-1$ for $k>3$.

When $n>2$, the count of invariants was done by C.\,N.\ Haskins \cite{Ha}:
$h_k=0$ for $k<2$, $h_2=\frac12\binom{n}{3}(n+3)$ and
$h_k=\binom{n+1}{2}\binom{n+k-1}{k}-n\binom{n+k}{k+1}$ for $k>2$. This implies the formula \cite{K2}:
 $$
P(z)=\left\{
\begin{array}{ll}
\frac{z^2(1-z+2z^2-z^3)}{(1-z)^2}, & \text{ for }n=2,\vphantom{\frac{\frac22}{\frac22}}\\
\frac{n}z+\binom{n}2\cdot(1-z^2)-\frac1{(1-z)^n}\cdot\bigl(\frac{n}z-\binom{n+1}2\bigr), & \text{ for }n>2
\vphantom{\frac{\frac22}{\frac22}}.
\end{array}
\right.
 $$
Note that singularity at $z=0$ is inessential and is used here (and below) for compactification of the answer.

\subsubsection{{\bf Einstein metrics}}\label{2b}

This is an important special stratum. Note that the Einstein condition $\op{Ric}_g=\Lambda g$ for some
$\Lambda\in C^\infty(M)$ is an equation $\E$ on the sections of the bundle $E$ from the general case.
Recall that $\Lambda$ is constant and non-trivial cases arise for $n\ge4$ (indeed, $P(z)=z^2$ for $n=2,3$).
The description and count of differential invariants in the 4D case was done by V.\ Lychagin and V.\ Yumaguzhin
\cite{LY}. Their method extends further, as follows.

 \begin{prop}
We have: $h_k=0$ for $k<2$, $h_2=\frac1{12}(n^2-1)(n^2-12)$ and $h_k=
\frac{(k-1)n(n+k-1)(n+2k-2)}{2(k+1)(n-2)}\binom{n+k-4}{k}$ for $k>2$.
 \end{prop}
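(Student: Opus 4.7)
The plan is to extend the Lychagin--Yumaguzhin method for $n=4$ to arbitrary $n$ via the symbolic framework of Section~\ref{S2}. I would take $\pi: E \to M$ to be the bundle of non-degenerate symmetric 2-forms of fixed signature and view the Einstein condition as the algebraic second-order equation $\E = \{\op{Ric}_g = \Lambda g\} \subset J^2\pi$, preserved by $G = \op{Diff}_{\text{loc}}(M)$. The contracted second Bianchi identity forces $\Lambda$ to be constant, and $\E$ is formally integrable by a classical argument (cf.\ DeTurck's gauge reduction). Here $r=1$, $\g=\mathfrak{so}(p,q)$ is of finite type with $\g_i=0$ for $i\ge 2$, so the hypotheses of Theorem~\ref{T2} and Proposition~\ref{pLY} are satisfied.

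For $k\le 1$ one has $h_k=0$: all non-degenerate 2-forms of fixed signature form a single $GL$-orbit and the 1-jet is killed in normal coordinates. For $k=2$ the generic Einstein 2-jet is encoded by $\Lambda$ and the Weyl tensor; using $\dim\mathcal{W}=n(n+1)(n+2)(n-3)/12$ and the residual stabilizer $\mathfrak{so}(p,q)$ of dimension $\binom{n}{2}$, one gets
\begin{equation*}
h_2 \;=\; 1 + \dim\mathcal{W} - \binom{n}{2} \;=\; \frac{(n^2-1)(n^2-12)}{12}
\end{equation*}
after elementary simplification. For $k>2$, Proposition~\ref{pLY} gives $h_k = \dim g_k^{\E} - n\binom{n+k}{k+1}$, since $\g_{k+1}=0$ makes the orbit fiber full-dimensional. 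The symbol $g_k^{\E}=\ker\sigma_{k-2}(\op{Ric}_0)$ is computed by decomposing $S^kT^*_a\otimes S^2T^*_a$ into $O(n)$-isotypic components, discarding the traceless-Ricci-type part killed by the Einstein condition, and subtracting the contribution of the second Bianchi syzygies. The latter produce a shift of $2$ in both the partition length and the base dimension, yielding the factor $\binom{n+k-4}{k}$; a routine binomial simplification then delivers the stated compact formula.

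The main obstacle is the careful bookkeeping of derived Bianchi syzygies in the symbol prolongations. At order $2$ only the algebraic trace relation enters; at higher orders one must account for the prolonged contracted second Bianchi and verify that no further independent relations arise. I would handle this by induction on $k$ using involutivity of the first prolongation of the Einstein system (cf.\ Gasqui--Goldschmidt), after which the polynomial growth of $\dim g_k^{\E}$ is controlled by the reduced characteristic variety alone. The resulting $P(z)=\sum h_k z^k$ is then rational with its only pole at $z=1$ of order at most $n$, in agreement with Theorem~\ref{T2}.
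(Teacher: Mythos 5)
Your proposal follows essentially the same route as the paper: local freeness from the 2-jet level (the residual $SO(g)$ stabilizer being resolved on the curvature/Weyl tensor), $h_2$ counted as the scalar curvature plus the Weyl tensor modulo $SO(g)$, and $h_k=\dim g_k-\dim\Delta_{k+1}$ for $k>2$ with the symbol dimension obtained from the Ricci symbol resolved by the second Bianchi syzygy --- precisely the acyclic complex the paper writes down. The only cosmetic imprecision is attributing the factor $\binom{n+k-4}{k}$ directly to the Bianchi shift: that term actually contributes $n\binom{n+k-4}{k-3}$ and the clean binomial emerges only after combining all three terms, but since you defer to a routine simplification this does not affect correctness.
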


 \begin{proof}
In this case $r=1$, $a_0=g$ is a (pseudo-) Riemannian metric and similar to \cite{LY} (beware of
different indexing convention for stabilizers) we compute $\op{St}^2_{a_1}\simeq\op{St}^1_{a_0}=SO(g)$.
Since $a_2$ encodes the curvature tensor that, for Einstein metrics, consists of the scalar curvature and
the Weyl tensor, for generic $a_2$ and $n\ge4$, $k>1$ we get $\op{St}^{k+1}_{a_k}=0$.
Thus the action is locally free from the jet-level 2.

Now we can easily compute the orbit dimensions:
the orbit of $\DD^{k+1}_a$ in $\E^k_a$ has dimension of $\E^k_a=J^k_a$ for $k\leq1$ and
it has dimension of $\DD^{k+1}_a$ for $k\ge2$. Below we use the formulae $\dim S^kT^*_a=\binom{n+k-1}{k}$,
$\dim \oplus_{i\leq k}S^iT^*_a=\binom{n+k}{k}$, in particular $\dim\DD^k_a=n\binom{n+k}{k}$.

The Einstein equation $\E$ is expressed by $\binom{n+1}{2}-1$ second-order conditions
(traceless Ricci tensor vanishes). Consider at first the Ricci-flat equation,
whose symbol $\z_{\op{Ric}}$ is resolved via the following acyclic complex (see \cite{Be,K1.5,LY})
 $$
0\to g_k\longrightarrow S^kT^*_a\ot S^2T^*_a \stackrel{\z_{\op{Ric}}}\longrightarrow S^{k-2}T^*_a\ot S^2T^*_a
 \stackrel{\z_{\op{Bnc}}}\longrightarrow S^{k-3}T^*_a\ot T^*_a\to0
 $$
in which $\z_{\op{Bnc}}$ is the symbol of the Bianchi operator and $g_k$ is the symbol of $\E$,
i.e.\ $\op{Ker}(T\E^k_a\to T\E^{k-1}_a)$. Thus for the Ricci flat equation we get
 $$
\dim g_k=\binom{n+k-1}{k}\binom{n+1}{2}-\binom{n+k-3}{k-2}\binom{n+1}{2}+\binom{n+k-4}{k-3}n
 $$
(we let $\binom{m}{k}=0$ for $k<0$).
The only difference for Einstein equation is that we change $\dim g_2=\binom{n+1}{2}^2-\binom{n+1}{2}$ to
$\dim\bar{g}_2=\dim g_2+1$.

This implies $h_0=h_1=0$ and $h_2=\dim\bar{g}_2-\dim\Delta_3-\dim SO(g)=\frac1{12}n^2(n^2-13)+1$
(the stabilizer is resolved at this step). For $k>2$ we obtain
$h_k=\dim g_k-\op{dim}\Delta_{k+1}$ and the result follows.
 \end{proof}

This proposition implies the formula
 $$
P(z)=\frac{n(z+1)((n+1)z-2(z^2+1))}{2z(1-z)^{n-1}}-\tbinom{n}2(z^2-1)+\frac{n}z+z^2.
 $$
For physically relevant case of Lorntzian metrics in 4D this formula simplifies to the following,
where the first term in the last expression is the Poincar\'e function for Ricci-flat 4D metric derived in \cite{LY}.
 $$
P(z)=\frac{z^2(5+9z-15z^2+5z^3)}{(1-z)^3}=\frac{2z^2(2+6z-9z^2+3z^3)}{(1-z)^3}+z^2.
 $$

\subsubsection{{\bf Self-dual metrics in 4D}}\label{2c}

This is another important special stratum. The self-duality condition
$*W_g=W_g$ is an equation $\E$ on the sections of the bundle $E$ from the general case.

The description and count of differential invariants in this case was done by the author and E.\ Schneider
\cite{KS}: $h_k=0$ for $k<2$, $h_2=9$ and $h_k=\tfrac16(k-1)(k^2+25k+36)$ for $k>2$. This implies the formula
for the Poincar\'e function \cite{KS}:
 $$
P(z)=\frac{z^2(9+4z-30z^2+24z^3-6z^4)}{(1-z)^4} .
 $$

\subsubsection{{\bf K\"ahler metrics on $M^{2n}$}}\label{2d}

Though considered as metrics, they are not a stratum in the space of metrics.
Indeed, a K\"ahler structure is given by the first order equation $\E=\{\nabla^gJ=0\}$ on the bundle of almost
Hermitian pairs $(g,J)$ over $M$, the algebraic constraints are: $J^2=-\1$, $J^*g=g$. The signature of $g$
does not influence the computation below, which thus applies to pseudo-K\"ahler structures as well.

The count of differential invariants in this case was done by A.\ Schmelev
\cite{S}: $h_k=0$ for $k<2$, $h_2=\frac14n^2(n-1)(n+3)$ and
$h_k=\binom{2n+k+1}{k+2}-2\binom{n+k+1}{k+2}-2n\binom{n+k}{k+1}$ for $k>2$.
Note that though dimensions $s_k=\sum_{i=0}^kh_i$ are correct in \cite{S}, the sequence $h_k$
in the Poincar\'e series in Theorem 2.12 of loc.cit.\ has a flaw (wrong coefficient $h_2$ at $z^2$).
The proper formula is (note also that the case $n=1$ is special, since the equation $\E$ is trivial and a
K\"ahler metric is identical to a 2D Riemannian metric and an orientation):
 $$
P(z)=\left\{
\begin{array}{ll}
\frac{z^2(1-z+2z^2-z^3)}{(1-z)^2}, & \text{ for }n=1,\vphantom{\frac{\frac22}{\frac22}}\\
\frac1{z^2(1-z)^{2n}}-\frac{2(zn+1)}{z^2(1-z)^n}+n^2(1-z^2)+\frac{2nz+1}{z^2}, & \text{ for }n>1
\vphantom{\frac{\frac22}{\frac22}}.
\end{array}
\right.
 $$






\subsubsection{{\bf Hyper-K\"ahler metrics on $M^{4n}$}}\label{2e}

Similarly, consider the equation $\E$ in the bundle of almost hyper-Hermitian structures $(g,I,J,K)$
given by conditions that the operator fields $I,J,K$ satisfy the quaternionic relations and
are orthogonal with respect to $g$. The equation $\E$ describes integrability of $I_a\in\{I,J,K\}$ and
closedness of the corresponding 2-forms $\omega_a=g(I_a\cdot,\cdot)$; equivalently the condition $\nabla_gI_a=0$
is imposed for all $a$. The pseudo-group is $G=\op{Diff}_{\text{loc}}(M)$, as before.

The dimensions $s_k$ were computed by A.\ Schmelev \cite{S}. This implies:
$h_k=0$ for $k<2$, $h_2=\frac16n(n+3)(2n-1)(2n+1)$ and
$h_k=2\sum_{i=0}^n\binom{2n+k-i}{k}(n-i)-\binom{2n+k+1}{k+2}-2\binom{n+k+1}{k+2}$ for $k>2$.
However the coefficient $h_2$ at $z^2$ in the Poincar\'e series in Theorem 3.15 of loc.cit.\ is wrong,
so the answer there is not correct.
The proper formula is
 $$
P(z)=\frac{2n}{z(1-z)^{2n+1}}-\frac{3}{z^2(1-z)^{2n}}-n(2n+1)(z^2-1)+\frac{4nz+3}{z^2}.
 $$
Note that for $n=1$ hyper-K\"ahler metrics are Ricci-flat self-dual metrics in 4D, so this case is
on an intersection of subsections \ref{2b} and \ref{2c}.

\subsection{Linear connections}\label{S33}

These are sections of the affine bundle $E$ associated with the vector bundle $T^*M\ot T^*M\ot TM$ (or $S^2T^*M\ot TM$
for symmetric connections). Note that in general, the torsion $T_\nabla$ of a connection $\nabla$
is a 0-th order invariant, so the action of $G=\op{Diff}_{\text{loc}}(M)$ on $E$ is not transitive.
The bundle $E$ is however algebraic and the conclusion of Theorem \ref{T1} holds true.
Indeed, scalar differential invariants of 0-th order are rational invariants of the general linear group on the
space of torsion tensors (note that scalar polynomial differential invariants are only constants \cite{GN}).
For $n\ge3$ these also generate invariant derivations, whence a Lie-Tresse generation property
(first order invariants should be used for $n=2$ to get this).

\subsubsection{{\bf General linear connections on $M^n$}}\label{3a}

Since the connections for $n=1$ are all locally equivalent, we assume $n>1$.
The dimensions $h_k$ were computed by T.\ Thomas \cite{T}, see also \cite{D3}:

 \begin{prop}\label{P3a}
For $n>2$ we have: $h_0=\frac12n^2(n-3)$ and $h_k=n^3\binom{n+k-1}{k}-n\binom{n+k+1}{k+2}$, $k>0$.
In the exceptional case $n=2$ we get: $h_0=0$, $h_1=6$, $h_k=6k+2$, $k>1$.
 \end{prop}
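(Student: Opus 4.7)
The plan is to apply the symbolic machinery of Section \ref{S2} with $r=2$, since the Lie derivative of a linear connection along a vector field vanishing at $a$ involves the second jet of the field. The bundle $\pi:E\to M$ is affine, modelled on $T^*M\otimes T^*M\otimes TM$, so each fiber of $\pi$ has dimension $n^3$ and $\dim J^k_a\pi=n^3\binom{n+k}{k}$. The symbol $\rho:S^2T^*_a\otimes T_a\to T^*_a\otimes T^*_a\otimes T_a$ is the canonical inclusion onto the symmetric part, hence injective; so $\g_2=\op{Ker}\rho=0$ and by prolongation $\g_i=0$ for all $i\geq 2$, putting us in the locally free regime of the corollary after Proposition \ref{pLY}.

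First I would analyze the $0$-th order isotropy. The group $\DD^2_a$ is the semidirect product of $GL(T_a)$ with the unipotent subgroup $S^2T^*_a\otimes T_a$, which acts affinely by translations on the symmetric part of $J^0_a\pi$ via $\rho$. Quotienting out this translation, the stabilizer $\op{St}^2_{a_0}$ maps onto the $GL(T_a)$-stabilizer of the torsion $T_\nabla\in\Lambda^2T^*_a\otimes T_a$. For $n\geq 3$ one has $\dim\Lambda^2T^*_a\otimes T_a=n^2(n-1)/2\geq n^2=\dim GL(T_a)$, and a standard check shows that a generic torsion has trivial $GL$-stabilizer. Combined with $\g_i=0$ for $i\geq 2$, this propagates to a locally free action from jet-level $0$: $\op{St}^{k+2}_{a_k}=0$ for every $k\geq 0$ at generic $a_k$.

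Local freeness gives orbit dimension $\dim\DD^{k+2}_a=n\bigl[\binom{n+k+2}{k+2}-1\bigr]$, so
\[
s_k=n^3\binom{n+k}{k}-n\binom{n+k+2}{k+2}+n.
\]
Taking $h_k=s_k-s_{k-1}$ and applying Pascal's identity $\binom{n+k}{k}-\binom{n+k-1}{k-1}=\binom{n+k-1}{k}$ twice yields $h_k=n^3\binom{n+k-1}{k}-n\binom{n+k+1}{k+2}$ for $k\geq 1$, while direct substitution gives $h_0=s_0=\frac12 n^2(n-3)$, settling the case $n\geq 3$.

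The main obstacle is the exceptional case $n=2$. Here $\dim\Lambda^2T^*_a\otimes T_a=2$, and $GL(2)$ acts transitively on nonzero torsions with $2$-dimensional isotropy, so $\op{St}^2_{a_0}$ has dimension $2$ at generic $a_0$ and $\DD^2_a$ already acts transitively on $J^0_a\pi$, giving $h_0=s_0=0$. The delicate step is to verify that $\op{St}^3_{a_1}=0$ at a generic $a_1$, i.e.\ that passing to first jets breaks the residual $2$-dimensional torsion-isotropy; this amounts to a finite linear-algebra calculation for the action of that $2$-dimensional subgroup on the first derivatives of $\Gamma^i_{jk}$, where genericity of $a_1$ ensures no nontrivial fixed direction remains. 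Once this is established, the corollary after Proposition \ref{pLY} propagates $\op{St}^{k+2}_{a_k}=0$ for all $k\geq 1$, and the same formula $s_k=n^3\binom{n+k}{k}-n\binom{n+k+2}{k+2}+n$ (now valid only for $k\geq 1$) produces $h_1=s_1=6$ and $h_k=6k+2$ for $k\geq 2$ by the same Pascal-difference computation specialized at $n=2$.
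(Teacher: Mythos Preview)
Your proof is correct and follows essentially the same route as the paper. Both arguments rest on the identical symbolic facts: the action has order $r=2$, the symbol $\rho$ is the inclusion of $S^2T^*_a\otimes T_a$ into the symmetric part (so $\g_i=0$ for $i\ge2$), a generic torsion has trivial $GL(n)$-stabilizer for $n\ge3$, and for $n=2$ the residual $2$-dimensional stabilizer of the torsion is resolved at the level of $1$-jets. The only cosmetic difference is that the paper splits $\nabla=\nabla^0+\tfrac12 T_\nabla$ into a symmetric connection (order $2$) and a torsion tensor (order $1$) and reads off $h_k=\dim g_k-\dim\Delta_{k+2}$ directly, whereas you keep the bundle whole, compute $s_k=\dim J^k_a\pi-\dim\DD^{k+2}_a$ and then take differences; after one application of Pascal's identity these are the same formula.
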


The computation is easy, we present a short independent argument.

 \begin{proof}
Let us split $\nabla_XY=\nabla^0_XY+\frac12T_\nabla(X,Y)$, where $T_\nabla\in\Lambda^2T^*_a\ot T_a$ is
torsion of the connection $\nabla$ and $\nabla^0$ is a symmetric connection.
The tensor $T_\nabla$ is a first-order structure, i.e.\ $r=1$, and the action of the group $G=\op{Diff}_{\text{loc}}(M)$
is locally free starting from the jet-level 0 for $n>2$ and starting from the jet-level $1$ for $n=2$.
The symmetric connection $\nabla^0$ has order $r=2$;
it is not a tensor, but a section of an affine bundle with corresponding vector space $S^2T^*_a\ot T_a$.
The action of $G$ is locally free starting from the jet-level 1.
Indeed, the symbolic system associated to the action is the following:
$\g_0=T_a$, $\g_1=\op{End}(T_a)$ and $\g_i=0$ for $i\ge2$. Thus
$\pi_{k+2,3}:\op{St}^{k+2}_{a_k}\stackrel{\sim}\to\op{St}^3_{a_1}=0$ for generic $a_1$.

Therefore in the case $n>2$ we get: $h_0=\dim g_0-\dim\Delta_1-\dim\Delta_2=n^3-n^2-n\binom{n+1}{2}$
and for $k>0$
 $$
h_k=\dim g_k-\dim\Delta_{k+2}=n^3\binom{n+k-1}{k}-n\binom{n+k+1}{k+2}.
 $$
In the case $n=2$ the torsion has a unique non-zero orbit, so $h_0=0$ and the 2-dimensional stabilizer
is resolved in 1-jets, whence $h_1=6$, $h_k=8(k+1)-2(k+3)=6k+2$ for $k>1$.
 \end{proof}

We conclude for general linear connections
(the Poincar\'e function was proved rational in \cite{D3}, but the explicit form was not derived):
 $$
P(z)=\left\{
\begin{array}{ll}
\frac{2z(3+z-z^2)}{(1-z)^2}, & \text{ for }n=2,\vphantom{\frac{\frac22}{\frac22}}\\
\frac{n(n^2z^2-1)}{z^2(1-z)^n}-n^2+\frac{n(nz+1)}{z^2}, & \text{ for }n>2
\vphantom{\frac{\frac22}{\frac22}}.
\end{array}
\right.
 $$

\subsubsection{{\bf Symmetric connections on $M^n$}}\label{3b}

This case, which is a singular stratum in the space of general connections,
was investigated by S.\ Dubrovskiy \cite{D1}. In his computations dimensions of the stabilizers were correctly
determined, but dimensions $h_k$ were computed wrongly due to an arithmetic error (two flaws in Theorem 2.4 of loc.cit.:
factor $n$ before the second binomial coefficient should be omitted, and $n^2$ shall be subtracted from
the coefficient $h_1$). Correcting these yields:

 \begin{prop}
We have: $h_0=0$, $h_1=\frac13\,n^2(n^2-4)+\delta_{2,n}$,
$h_2=n\binom{n+1}{2}^2-n\binom{n+3}{4}-\delta_{2,n}$,
and $h_k=n\binom{n+1}{2}\binom{n+k-1}{n-1}-n\binom{n+k+1}{n-1}$ for $k\ge2$.
 \end{prop}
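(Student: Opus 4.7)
The plan is to apply the prolongation methodology of Section \ref{S2} to the bundle $\pi:E\to M$ whose sections are symmetric linear connections: an affine bundle modelled on $S^2T^*M\otimes TM$ (fibre dimension $n\binom{n+1}{2}$) on which $G=\op{Diff}_{\text{loc}}(M)$ acts with order $r=2$. The symbol $\rho:S^2T^*_a\otimes T_a\to T_{a_0}E_a=S^2T^*_a\otimes T_a$ is the identity up to sign, hence an isomorphism, so $\mathfrak{g}=\op{Ker}\rho=0$ and $\mathfrak{g}_i=0$ for all $i\ge 2$. By Proposition \ref{pLY} the maps $\zeta^k_{a_0}$ have trivial kernel for every $k\ge 0$, so $\pi_{k+2,k+1}:\op{St}^{k+2}_{a_k}\to\op{St}^{k+1}_{a_{k-1}}$ is injective and the stabilizer dimensions are non-increasing in $k$.

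Next I would pin down the stabilizer dimensions at the low jet-levels. Since $\rho$ is an isomorphism, $\Delta_2\subset\DD^2_a$ acts transitively on the fibre of $J^0\pi$, so $G$ is transitive on $E$ (giving $h_0=0$) and $\op{St}^2_{a_0}\cong GL(T_a)$ has dimension $n^2$. The residual $GL(T_a)$-action on the reduced $1$-jet space has trivial generic stabilizer when $n\ge 3$, whence $\op{St}^3_{a_1}=0$ and local freeness from jet-level $1$. In the exceptional case $n=2$ a one-dimensional residual isotropy survives at $k=1$ (a scaling subgroup in an adapted frame, reflecting the reduced rank of the curvature tensor in dimension two), and this residue is absorbed at the next prolongation, so $\op{St}^4_{a_2}=0$ and local freeness holds from jet-level $2$.

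Once the stabilizer profile is in hand the computation becomes a direct tally. Writing
\[
s_k=n+n\binom{n+1}{2}\binom{n+k}{k}-n\binom{n+k+2}{k+2}+\dim\op{St}^{k+2}_{a_k}
\]
and taking differences $h_k=s_k-s_{k-1}$, Pascal's identity collapses the expression in the locally free range to
\[
h_k=n\binom{n+1}{2}\binom{n+k-1}{k}-n\binom{n+k+1}{k+2}=n\binom{n+1}{2}\binom{n+k-1}{n-1}-n\binom{n+k+1}{n-1},
\]
which is the stated formula for $k\ge 2$; at $k=1$ with $n\ge 3$ a further Pascal simplification yields $\tfrac13 n^2(n^2-4)$. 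The $\pm\delta_{2,n}$ corrections to $h_1$ and $h_2$ then track precisely the anomalous two-dimensional stabilizer transitions: $\dim\op{St}^3_{a_1}=1$ rather than $0$ bumps $h_1$ by $+1$ over the generic formula, and the subsequent drop $1\to 0$ at jet-level $2$ subtracts $1$ from $h_2=s_2-s_1$.

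The only non-routine step is verifying the one-dimensional residual isotropy for generic $1$-jets of symmetric connections in dimension two together with its vanishing after one further prolongation; all remaining work is bookkeeping with hockey-stick and Pascal identities. I would cross-check the final values of $h_1$ and $h_2$ for $n=2,3,4$ to confirm the signs of the $\delta_{2,n}$ corrections, since \cite{D1} made exactly this kind of arithmetic slip and these low-dimensional instances are the most effective sanity check.
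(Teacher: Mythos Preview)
Your proposal is correct and follows essentially the same route as the paper: you identify the structure as order $r=2$ with $\rho$ an isomorphism (so $\g_i=0$ for $i\ge2$), establish transitivity on $0$-jets and local freeness from jet-level $1$ for $n>2$ (from jet-level $2$ for $n=2$ due to the one-dimensional stabilizer of $GL(2)$ on the curvature space), and then read off $h_k$ from the resulting orbit dimensions. The paper phrases the $n=2$ anomaly as the non-free action of $GL(2)$ on the space of curvature (Ricci) tensors, which is exactly your ``residual isotropy at $k=1$'' observation; the bookkeeping is identical.
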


Let us give some details (our computation is independent of \cite{D1}).

 \begin{proof}
As we noted above, symmetric connections are structures of order $r=2$ and the action of
$G=\op{Diff}_{\text{loc}}(M)$ is locally free starting from the jet-level $1$ for $n>2$. Thus
 $$
h_0=\dim g_0-\dim\Delta_2=n\binom{n+1}{2}-n\binom{n+1}{2}=0.
 $$
Next $h_1=\dim g_1-\dim\Delta_1-\dim\Delta_3=\frac13n^2(n^2-4)$
is the number of rational invariants of the (free) action of the general linear group $GL(n)$ on the space
of curvature tensors $\mathcal{K}=\op{Ker}(\La^2T^*_a\ot\mathfrak{gl}(n)\to\La^3T_a^*\ot T_a)$.
For $k>1$ we get:
 $$
h_k=\dim g_k-\dim\Delta_{k+2}=\frac12n^2(n+1)\binom{n+k-1}{k}-n\binom{n+k+1}{k+2}.
 $$
In the case $n=2$ the action of $GL(2)$ on the space of curvature (or Ricci) tensors is not free,
there is a one-dimensional stabilizer that is resolved on the next jet-level. Thus here
$h_1$ increases by 1 and $h_2$ decreases by $1$, implying the claim.
 \end{proof}

This implies the formula:
 $$
P(z)=\left\{
\begin{array}{ll}
\frac{z(1+5z-z^2-z^3)}{(1-z)^2}, & \text{ for }n=2,\vphantom{\frac{\frac22}{\frac22}}\\
\frac{(n(n+1)z^2-2)n}{2z^2(1-z)^n}-n^2z+\frac{n(1+nz)}{z^2}, & \text{ for }n>2
\vphantom{\frac{\frac22}{\frac22}}.
\end{array}
\right.
 $$

\subsubsection{{\bf Metric connections on $M^n$}}\label{3c}

A metric connection consists of a pair $(g,\nabla)$, where
$g$ is a (pseudo-)Riemannian metric, $\nabla$ a linear connection on $M$, and $\nabla g=0$.
The structure is the pair composed of a metric $g$, which has order $r=1$, and a connection $\nabla$,
which has order $r=2$ with respect to the action of $G=\op{Diff}_{\text{loc}}(M)$.

It is well known that given $g$ such $\nabla$ are bijective with their torsions
$T=T_\nabla\in\Gamma(\La^2T^*M\otimes TM)$.
Indeed, $\alpha_X=\nabla_X-\nabla^g_X$ is a $g$-skew symmetric map for every $X\in TM$.
The skew-symmetrization map $T^*_a\ot\La^2T^*_a\to\La^2T^*_a\ot T^*_a$, given by
$\alpha\mapsto\tau=\alpha_\curlywedge$, $\tau(X,Y,Z)=\frac12(\alpha(X,Y,Z)-\alpha(Y,X,Z))$,
is an isomorphism; its inverse is given by the formula
$\alpha(X,Y,Z)=\tau(X,Y,Z)-\tau(X,Z,Y)-\tau(Y,Z,X)$.
Denote the inverse map $\tau\mapsto\alpha=\tau_\curlyvee$:
$(\tau_\curlyvee)_\curlywedge=\tau$. Using the operation $\sharp$ of raising the indices,
we conclude the formula $\nabla-\nabla^g=\frac12T^\sharp_\curlyvee$, i.e.
$g(\nabla_XY,Z)=g(\nabla^g_XY,Z)+\tfrac12\bigl(g(T(X,Y),Z)+g(T(Z,X),Y)-g(T(Y,Z),X))\bigr)$.

Thus we consider the pairs $(g,T)$, which are simpler objects but their jets are staggered:
$k$-jet of $(g,\nabla)$ corresponds to $(k+1)$-jet of $g$ and $k$-jet of $T$ for $k>0$.
Note that 1-st derivatives of $g$ are bijective with the Christoffel coefficients of $\nabla$.
Indeed, lowering the indices $\Gamma_{ijk}=\Gamma^l_{ij}g_{lk}$, $T_{ijk}=T^l_{ij}g_{lk}$,
one gets the relations $\p_kg_{ij}=\Gamma_{ikj}+\Gamma_{jki}$ and
 $$
2\Gamma_{ijk}=\p_ig_{jk}+\p_jg_{ik}-\p_kg_{ij}+T_{ijk}+T_{jki}-T_{kij}.
 $$
Thus 0-jet of our pair is given by the values of $g$ and $T$, while for $k>0$ the $k$-jet of the pair
is bijective with $(k+1)$-jet of $g$ and $k$-jet of $T$.

Note that the bundle of metric connections is another instance where the pseudogroup $G$ acts non-transitively.
However the same remarks as in subsection \ref{3a} apply here, and we can proceed as before.
The count of invariants for the pairs $(g,T)$ is given as follows.
 \begin{prop}
We have: $h_0=\frac{n}2(n-1)^2$ and
$h_k=\dim g_k-\dim\Delta_{k+1}=\frac{n(n+1)}2\binom{n+k}{k+1}+\frac{n^2(n-1)}2\binom{n+k-1}{k}-n\binom{n+k+1}{k+2}$
for $k>0$.
 \end{prop}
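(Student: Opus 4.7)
The plan is to treat metric connections as a geometric structure given by the pair $(g,T)$ via the algebraic bijection displayed above, and apply Proposition \ref{pLY} after carefully tracking the staggered jet indexing. Because the Christoffel formula is first-order differential in the metric entries, a $k$-jet of $(g,\nabla)$ corresponds to a $(k+1)$-jet of $g$ together with a $k$-jet of $T$. Hence the pure order-$k$ symbol of the structure decomposes as $S^{k+1}T^*_a\ot S^2T^*_a\oplus S^kT^*_a\ot\La^2T^*_a\ot T_a$, yielding $\dim g_k=\tfrac{n(n+1)}{2}\binom{n+k}{k+1}+\tfrac{n^2(n-1)}{2}\binom{n+k-1}{k}$ as claimed.

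For $h_0$ I would argue directly at the fiber level. The pair $(g,T)$ lives in $V=S^2T^*_a\oplus(\La^2T^*_a\ot T_a)$ of dimension $\tfrac{n(n^2+1)}{2}$, with $GL(T_aM)$ of dimension $n^2$ acting. A generic metric has isotropy $O(g)$ of dimension $\binom{n}{2}$, and imposing preservation of a generic torsion tensor collapses the joint isotropy to zero for $n\geq 2$. The orbit therefore has full dimension $n^2$ and $h_0=\tfrac{n(n^2+1)}{2}-n^2=\tfrac{n(n-1)^2}{2}$.

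For $k\geq 1$ I would apply Proposition \ref{pLY} together with its corollary on stabilizer resolution. The key input is local freeness of the prolonged action on generic jets, which follows because the symbolic system $\g_*$ associated with the Lie equation of infinitesimal symmetries of $(g,T)$ satisfies $\g_1=\mathfrak{so}(g)\cap\op{stab}(T)=0$ for generic $T$, so its prolongations $\g_i$ vanish for all $i\geq 1$. Consequently the stabilizers $\op{St}^{k+r}_{a_k}$ are trivial from jet-level $1$ onward, and $h_k$ equals $\dim g_k$ minus the pure order-$k$ contribution from diffeomorphism jets, which is $n\binom{n+k+1}{k+2}$. Subtracting yields the stated expression.

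The main obstacle is the verification of the genericity claim underlying local freeness, namely that for a generic pair $(g,T)$ in the $0$-jet fiber the joint stabilizer in $GL(T_aM)$ is trivial; this requires care in the low-dimensional case $n=2$ where $O(g)$ is itself one-dimensional, but the natural $O(g)$-action on $\La^2T^*_a\ot T_a$ is free on a Zariski-open subset already for $n=2$. Once this is in place, the remaining computation is routine combinatorics of binomial coefficients combined with careful bookkeeping of the staggered indexing, which is what produces the asymmetric appearance of $\binom{n+k}{k+1}$ versus $\binom{n+k-1}{k}$ in the final formula.
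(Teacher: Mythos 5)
Your proposal is correct and follows essentially the same route as the paper: treat the structure as the staggered pair $(g,T)$ with symbol $S^{k+1}T^*_a\ot S^2T^*_a\oplus S^kT^*_a\ot\La^2T^*_a\ot T_a$, establish local freeness from the $0$-jet level via triviality of the joint $GL(T_aM)$-stabilizer of a generic $(g,T)$ (equivalently, freeness of the $O(g)$-action on generic torsions, where only the identity component matters for the dimension count), and subtract the pure order contribution $n\binom{n+k+1}{k+2}$ of the diffeomorphism jets. No gaps.
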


 \begin{proof}
The structure $(g,T)$ is a section of a tensor bundle of rank $\frac12n(n^2+1)$. The general linear group acts
on an open set of it freely (equivalently: the orthogonal group acts freely on the general stratum of
the space of torsions) for $n>1$. Consequently, by the Corollary, the stabilizer $\op{St}^{k+1}_{a_k}$ vanishes
for all $k\ge0$, and so the action is (locally) free from the jet-level 0.
The symbol $g_k$ of the structure, as a staggered pair $(g,T)$, is
$S^{k+1}T^*_a\ot S^2T^*_a\oplus S^kT^*_a\ot\La^2T^*_a\ot T_a$ for $k\ge1$.

Therefore $h_k=\dim g_k-\dim\Delta_{k+1}=\frac{n(n+1)}2\binom{n+k}{k+1}+\frac{n^2(n-1)}2\binom{n+k-1}{k}-n\binom{n+k+1}{k+2}$,
and the claim follows.
 \end{proof}

This implies (again we exclude the case $n=1$ as trivial: $P(z)=0$) the formula for $n\ge2$:
 $$
P(z)=\frac{n-\binom{n}2(z^2-z)}{z^2}-\frac{2n-n(n+1)z-n^2(n-1)z^2}{2z^2(1-z)^n}.
 $$

\subsubsection{{\bf Metric connections with a skew-symmetric torsion}}\label{3c+}

These form a partial case of general metric connections: these consist of $(g,T)$ with $T_{ijk}=T^l_{ij}g_{lk}$
being skew-symmetric in all indices. Since such $T$ vanishes in dimension $n=2$ (this case belongs
to metrizable connections discussed below), so assume $n\ge3$.

On the level of 0-jets, the stabilizer of generic point $a_0\in\E^0_a$ corresponds to stabilizer of a generic
3-form in the orthogonal group, so the sequence $\op{st}_n=\dim\op{St}^2_{a_0}$, depending on $n=\dim M$,
is the following: $\op{st}_3=3$, $\op{st}_4=3$, $\op{st}_5=2$, and $\op{st}_n=0$ for $n\ge6$.

 \begin{rk}
The stabilizers of a generic 3-form in the group $O(g)$ for a fixed metric $g$ on $T_a$, are:
$SO(g)$ for $n=3$,
$SO(3)\times\Z_2$ or $SO(1,2)\times\Z_2$ depending on the signature of $g$ for $n=4$,
$S(O(2)\times O(2))$ for $n=5$,
and trivial for $n\ge6$.
Note that extending $O(g)$ to $GL(n)$ the stabilizers become nontrivial for $n$ up to $8$;
for example, when $n=7$ the stabilizer of a generic 3-form $\omega$ is a real form of
the exceptional Lie group $G_2$; it preserves some metric, depending on $\omega$, but not the given metric $g$.
 \end{rk}

Thus the action is free from the level of 0-jets for $n\ge6$ and from the level of 1-jets for $3\leq n\leq 5$.
This implies
 $$
P(z)=\frac{n-\binom{n}2(z^2-z)}{z^2}-\frac{n-\binom{n+1}2z-\binom{n}3z^2}{z^2(1-z)^n}+\op{st}_n(1-z).
 $$

The space of metrizable connections consists of such $\nabla$ that $T_\nabla=0$ and there exists a
parallel metric $g$. Generic connections of this type have irreducible holonomy, and for them the metric
$g$ is unique up to scale, i.e.\ $\nabla=\nabla^g$ is the Levi-Civita connection
(for generic $\nabla$ the scale can be fixed by the requirement $\|R_\nabla\|_g^2=\pm1$).
Thus for metrizable connections the Hilbert function is equal to $h_{k+1}$ of subsection \ref{2a},
and hence the Poincar\'e function of our problem is expressed via the function $P(z)$ of subsection \ref{2a}
as $P(z)/z$.


\subsubsection{{\bf Symplectic connections on $M^{2n}$}}\label{3d}

Similarly, let us count moduli of Fedosov structures $(\omega,\nabla)$, consisting of
a symplectic form $\omega$ on $M$ and a linear symmetric connection $\nabla$ such that $\nabla\omega=0$
(note that this condition and $d\omega=0$ imply $T_\nabla=0$ \cite{V}).
The pairs $(\omega,\nabla)$ with this differential relation form an equation $\E$. In \cite{D2} S.\ Dubrovskiy
investigated the number of differential invariants in the "staggered" jet-filtration
$j^k(\omega,\nabla)=(j^k\omega,j^{k-1}\nabla)$ on $\E$.
In contrast, we consider here the natural jet-filtration $j^k(\omega,\nabla)=(j^k\omega,j^k\nabla)$.

 \begin{prop}
The orbit dimensions for jets of Fedosov structures are: $h_0=0$,
$h_1=\frac12(n-1)n(2n+1)(2n+3)+\delta_{n,1}$, $h_2=\frac15n(n+1)(3n+2)(4n^2-1)-\delta_{1,n}$,
and $h_k=\binom{2n+2}{3}\binom{2n+k-1}{k}-\binom{2n+k+2}{k+3}$ for $k\ge3$.
 \end{prop}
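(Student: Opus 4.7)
The approach is to pass to a Darboux normal form. By the formal Darboux theorem, $G=\op{Diff}_{\text{loc}}(M)$ acts transitively on jets of symplectic forms, so locally one may fix $\omega=\omega_0=\sum dp^i\wedge dq^i$; classifying Fedosov pairs under $G$ then reduces to classifying symmetric connections compatible with $\omega_0$ under the residual pseudogroup $\op{Symp}_{\text{loc}}(M,\omega_0)$, whose infinitesimal generators are the Hamiltonian vector fields $X_H=\omega_0^{-1}(dH)$.

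In the Darboux frame, $\nabla\omega_0=0$ together with the symmetry of $\nabla$ forces the lowered Christoffels $\Gamma_{ijk}:=\omega_{0,kl}\Gamma^l_{ij}$ to be totally symmetric, so the structure is parametrized by $\Gamma\in\Gamma(S^3T^*M)$ and the symbol at pure $k$-order is $g_k=S^kT^*_a\ot S^3T^*_a$ of dimension $\binom{2n+2}{3}\binom{2n+k-1}{k}$. Since $\nabla$ is pulled back with order $r=2$, so is $\Gamma$, and one analyses $\Delta^{\op{Symp}}_{k+2}$: a Hamiltonian vector field $X_H$ vanishing to order $k+2$ at $a$ arises from $H$ vanishing to order $k+3$ (as $X_H=\omega_0^{-1}(dH)$ has jet-order one less than $H$), with top-order coefficient in $S^{k+3}T^*_a$ of dimension $\binom{2n+k+2}{k+3}$.

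For $k\ge 3$ I would verify that the reduced action on generic jets is locally free. The top-order symbol $\rho:S^{k+3}T^*_a\to S^kT^*_a\ot S^3T^*_a$ of the Hamiltonian Lie-derivative action on $\Gamma$ is essentially the symmetric-multiplication embedding $S^{k+3}T^*\hookrightarrow S^kT^*\ot S^3T^*$ (twisted by $\omega_0$), which is injective; combined with $\g_2=0$ for connections and the Corollary of Section~\ref{S2}, this gives $\op{St}^{k+2}_{a_k}=0$ for generic $a_k$ and $k\ge 3$, whence $h_k=\dim g_k-\dim\Delta^{\op{Symp}}_{k+2}=\binom{2n+2}{3}\binom{2n+k-1}{k}-\binom{2n+k+2}{k+3}$ as claimed.

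The low-order cases $k=0,1,2$ require direct analysis tracking the still-nontrivial stabilizer: at $k=0$ the $\op{Sp}(2n)$-orbit on $\Gamma(a)\in S^3T^*_a$ is open, yielding $h_0=0$; at $k=1$ and $k=2$ the invariants arise from the curvature tensor of $\nabla$ (as an $\op{Sp}(2n)$-module) and from its covariant derivative, respectively. The Kronecker corrections $\delta_{n,1}$ reflect the small-dimensional degeneracy for $n=1$, where $\op{Sp}(2)=\op{SL}(2)$ is only three-dimensional and the stabilizer dynamics of the low jets of $\Gamma$ differ from the generic pattern. The principal obstacle is rigorously verifying the locally-free threshold at $k=3$: the injectivity of $\rho$ is clean from $\op{Sp}(2n)$-representation theory, but one must also establish the triviality of generic local Fedosov symmetry beyond 2-jets via a Spencer-cohomology argument analogous to those used for metric connections in subsection~\ref{3c}.
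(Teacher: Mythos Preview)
Your overall strategy---Darboux-normalize $\omega$, parametrize $\nabla$ by $\Gamma\in S^3T^*$, and act by the Hamiltonian pseudogroup---is exactly the paper's approach, and your dimension count $h_k=\binom{2n+2}{3}\binom{2n+k-1}{k}-\binom{2n+k+2}{k+3}$ for large $k$ is correct. However, there is a concrete error in your low-order analysis and an unnecessary complication in the threshold.

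The error: you claim that $h_0=0$ because ``the $\op{Sp}(2n)$-orbit on $\Gamma(a)\in S^3T^*_a$ is open.'' This is false by a dimension count: $\dim S^3T^*_a=\binom{2n+2}{3}$ grows cubically in $n$ while $\dim\op{Sp}(2n)=n(2n+1)$ grows only quadratically, so the linear symplectic orbit is never open (already for $n=1$ it is $3$-dimensional in a $4$-space). The correct mechanism is that the \emph{full} $2$-jet symplectic group acts on $0$-jets of $\nabla$: its top stratum $\Delta^{\op{Symp}}_2\simeq S^3T^*_a$ (cubic Hamiltonians) acts by affine translation on the affine space of $\omega$-compatible Christoffels, and this is a bijection. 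The $0$-jet orbit is therefore all of $S^3T^*_a$ with stabilizer exactly $\op{St}^2_{a_0}=\op{Sp}(2n)$, which is what gives $h_0=0$.

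The complication: you push the locally-free threshold to $k=3$ and flag the $k=1,2$ cases as requiring separate direct analysis plus a Spencer argument. The paper dispatches this in one line: for $n>1$ the linear symplectic group $\op{Sp}(2n)$ acts freely on the space of generic curvature tensors of $\nabla$ (the symplectic curvature space described by Vaisman), so the stabilizer is already resolved at jet-level $1$. This immediately gives $h_1=\dim g_1-\dim\Delta^{\op{Symp}}_1-\dim\Delta^{\op{Symp}}_3$ and $h_k=\dim g_k-\dim\Delta^{\op{Symp}}_{k+2}$ for all $k\ge2$ (not just $k\ge3$), with no further Spencer bookkeeping needed. Only for $n=1$ does the stabilizer survive one more level, producing the $\delta_{n,1}$ shifts.
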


 \begin{proof}
This structure $(\omega,\nabla)$ is of mixed orders, similarly to the metric case: $r=1$ for $\omega$ and
$r=2$ for $\nabla$. It is worth fixing the stabilizer of $\omega$ (since this has the Darboux normal form) to be
the (infinite-dimensional) pseudo-group of symplectomorphisms $G=\op{Diff}_{\text{loc}}(M,\omega)$,
with the Lie algebra sheaf $\mathcal{G}$ whose elements $X_H$ are encoded by Hamiltonians defined up to
constant terms $H\in C^\infty(M^{2n})/\R$.

The condition on the connection to be symplectic means that $\Gamma_{ijk}=\oo_{ia}\Gamma^a_{jk}$ is symmetric
in all indices in coordinates where $\omega$ is constant \cite{D2}. Equivalently, if we fix one symplectic connection
$\nabla^0$, then any other symplectic connection is $\nabla=\nabla^0+A$, where $A\in S^2T_a^*\ot T_a$ is fully
symmetric upon $\omega$-lowering the indices: $\omega(A(\cdot,\cdot),\cdot)\in S^3T^*_a$.

Thus, in the reduced form ($\omega$ fixed, $\nabla$ varies),
the geometric structure is given by a section of an affine
bundle with the corresponding vector bundle $S^3T_a^*$.
Hence the symbols of the equation $\E$ are equal to $g_k\simeq S^kT_a^*\ot S^3T_a^*$.
The action of group $G$ has order $r=3$: the elements depend on 1-jet of the Hamiltonian $H$ and
the lift to the space of connections adds two orders.

The stabilizer of a 0-jet is the symplectic group $\op{St}^2_{a_0}=\op{Sp}(T_a,\omega)$ with
the Lie algebra generated by Hessians $d^2_aH$, and we normalize $H(a)=0$, $d_aH=0$ .
For $n>1$ this stabilizer is resolved on the level of 1-jets, because the linear symplecic group acts freely
on the space of curvatures of $\nabla$ (this space is described in \cite{V}). Thus $h_0=\dim g_0-\dim\Delta_3=0$, $h_1=\dim g_1-\dim\Delta_2-\dim\Delta_4$
and $h_k=\dim g_k-\dim\Delta_{k+3}=0$ for $k\ge2$ as was claimed.

In the case $n=1$ ($\dim M=2$) the curvature is expressed by the Ricci part and the stabilizer is resolved
on the next jet-level, i.e.\ the action is locally free not from the level of 1-jet, but from the level of 2-jets.
Again $h_0=0$, but now $h_1=1$ -- the only invariant of order one is the norm of the Ricci tensor of the connection
with respect to $\oo$. Thus a 1-dimensional stabilizer exists on this level, but it resolves on the next level,
and we have $h_2=5$, $h_k=3k$ for $k\ge3$. In other words, with respect to the formulae for $n>1$ dimension $h_1$
increases by 1 and $h_2$ decreases by 1, the other dimensions being un-changed.
 \end{proof}

This implies the formula for the Poincar\'e function: 
 $$
P(z)=\left\{
\begin{array}{ll}
\frac{z(1+3z-z^3)}{(1-z)^2}, & \text{ for }n=1,\vphantom{\frac{\frac22}{\frac22}}\\
\frac{2n(2n^2+3n+1)z^3-3}{3z^3(1-z)^{2n}}+\frac{1+2nz-n(2n+1)z^2(z^2-1)}{z^3}, & \text{ for }n>1
\vphantom{\frac{\frac22}{\frac22}}.
\end{array}
\right.
 $$

\subsubsection{{\bf Projective connections on $M^n$}}\label{3e}

Two linear connections are projectively equivalent
if their geodesics coincide as un-parametrized curves. An equivalence class is called a projective
connection. Every such structure is represented by a symmetric connection, and two symmetric connections
$\nabla$, $\nabla'$ are projectively equivalent iff for some 1-form $\nu$ we have:
 $$
\nabla_XY-\nabla'_XY=\nu(X)Y+\nu(Y)X.
 $$
In components, an equivalence class is represented by Thomas' symbols
$\Pi^k_{ij}=\Gamma^k_{ij}-\frac1{n+1}(\delta^k_i\Gamma^a_{aj}+\delta^k_j\Gamma^a_{ai})$, which is the
traceless symmetric part of the Christoffel symbol $\Gamma^k_{ij}$.

The number of differential invariants (projective scalars) was computed in \cite{L2}
(2D projective connections are equivalent to cubic ODE considered by Lie and Tresse, see subsection \ref{1b}).
We provide an independent short computation.

 \begin{prop}
For $n>2$ we get $h_0=0$, $h_1=\frac13n^2(n^2-7)$, $h_2=\frac{n}{24}(n-2)(5n^3+16n^2+15n+12)$,
$h_k=\frac{n}2(n-1)(n+2)\binom{n+k-1}{k}-n\binom{n+k+1}{k+2}$ for $k>2$.
For $n=2$, $h_k=0$ for $k<4$ and $h_k=2(k-1)$ for $k\ge4$.
 \end{prop}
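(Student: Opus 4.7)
The plan is to apply Proposition \ref{pLY} to projective connections, viewed as sections of an affine bundle whose associated vector bundle has fibre $(S^2T^*M\ot TM)_0$, the traceless $(2,1)$-tensors, of dimension $\frac12 n(n-1)(n+2)$. The lift of $\op{Diff}_{\text{loc}}(M)$ has order $r=2$, and the kernel $\g_r=\g_2=\op{Ker}\rho$ consists of the elements $\nu\mapsto(\delta^k_i\nu_j+\delta^k_j\nu_i)$ for $\nu\in T^*_a$, reflecting the projective equivalence $\nabla_XY-\nabla'_XY=\nu(X)Y+\nu(Y)X$. Thus the symbolic system is $\g_0=T_a$, $\g_1=\fgl(T_a)$, $\g_2=T^*_a$, $\g_i=0$ for $i\ge3$, which is the graded Spencer decomposition of $\mathfrak{sl}(n+1)$ acting projectively on $\R P^n$.

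By the Corollary to Proposition \ref{pLY} one has $\op{Ker}(\zeta^k_{a_0})=\g_{k+2}=0$ for $k\ge1$, so $\op{St}^{k+2}_{a_k}\hookrightarrow\op{St}^{k+1}_{a_{k-1}}$ injectively from jet-level one. Since $\DD^2_a$ acts transitively on $\E^0_a$ (the symbol $\rho$ is the traceless projection, which is surjective), I would fix the flat representative $a_0=0$. Then $\op{St}^2_{a_0}$ is the point stabilizer of $\op{PGL}(n+1)$ on $\R P^n$, of dimension $n(n+1)$, decomposing as $\fgl(T_a)\oplus\g_2$. For $n\ge3$ and generic $a_1$ the $\fgl(T_a)$-summand is killed by the projective Weyl tensor, whose generic $\op{GL}(T_a)$-orbit has trivial stabilizer; meanwhile, a direct Lie-derivative computation with vector fields $X=(\nu\!\cdot\!x)x^i\p_i$ shows that the $\g_2$-summand acts trivially on $j^1\Pi$ at $\Pi=0$, so it lifts unobstructed. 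This gives $\dim\op{St}^3_{a_1}=n$. At the next level, $\g_3=0$ forces the induced action of $\g_2$ on $j^2\Pi$ to be faithful on a Zariski-open set, yielding $\op{St}^4_{a_2}=0$.

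The Hilbert function is then read off from
\[
h_k=\dim g_k-\dim\Delta_{k+2}+\bigl[\dim\op{St}^{k+2}_{a_k}-\dim\op{St}^{k+1}_{a_{k-1}}\bigr],
\]
with $\dim g_k=\frac12 n(n-1)(n+2)\binom{n+k-1}{k}$ and $\dim\Delta_{k+2}=n\binom{n+k+1}{k+2}$. Substitution gives $h_0=-n+(n(n+1)-n^2)=0$, $h_1=\frac13 n^2(n^2-4)-n^2=\frac13 n^2(n^2-7)$, and after routine simplification the stated $h_2$; for $k\ge3$ the bracketed correction vanishes, leaving $h_k=\frac{n}2(n-1)(n+2)\binom{n+k-1}{k}-n\binom{n+k+1}{k+2}$. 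For $n=2$ the projective Weyl tensor vanishes identically, so the argument above fails; I would handle this case by invoking the classical point-equivalence between 2D projective connections and second-order ODEs cubic in $y'$, and import the count $h_k=0$ for $k<4$, $h_k=2(k-1)$ for $k\ge4$, directly from subsection \ref{1b}.

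The main obstacle is the precise tracking of the stabilizer at the 1- and 2-jet levels. Specifically, one must confirm both that the projective Weyl tensor kills the $\fgl(T_a)$-component at level one (which can be done either by dimension counting, since the orbit already exhausts $\dim\op{GL}(T_a)=n^2$, or by exhibiting one explicit generic tensor), and that the $\g_2$-component persists at level one but collapses at level two. The latter is precisely where $\g_3=0$ plays a genuinely dynamical role, and it must be verified by an explicit Spencer-prolongation calculation rather than dimension counting alone.
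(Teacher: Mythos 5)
Your argument is essentially the paper's own: the same affine bundle of rank $\tfrac12 n(n-1)(n+2)$, the same symbolic system $\g_0=T_a$, $\g_1=\fgl(T_a)$, $\g_2\simeq T_a^*$, $\g_i=0$ for $i\ge3$, local freeness from jet-level $2$, and the same dimension count $h_k=\dim g_k-\dim\Delta_{k+2}$ with stabilizer corrections at levels $0$, $1$, $2$ (note the paper's proof writes the level-two correction as $+n$, whereas the stated value of $h_2$ in the proposition corresponds to your $-n$, so your bookkeeping is the internally consistent one). The $n=2$ case is likewise handled in the paper via the equivalence with second order ODEs cubic in $y'$ from subsection \ref{1b}.
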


 \begin{proof}
These structures are sections of an affine bundle of rank $n\binom{n+1}2-n=\frac12(n-1)n(n+2)$, whence
$\dim g_k=\frac12(n-1)n(n+2)\binom{n+k-1}{k}$.

The symbolic system associated to the action is $\g_0=T_a$,
$\g_1=\op{End}(T_a)$, $\g_2=\g_1^{(1)}\simeq T_a^*$ and $\g_i=0$ for $i>2$. This implies that
$\op{St}^{k+2}_{a_k}$ stabilize from the level $k=2$, and for generic $a_2$ this stabilizer vanishes.
Thus the action is locally free from the jet-level 2, and hence $h_k=\dim g_k-\dim\Delta_{k+2}$ for $k\ge3$.

This formula is modified in lower orders as follows: $h_0=0$ (no 0$^\text{th}$ order invariants;
increase of $h_0$ by $n$ with respect to the general formula), $h_1=\dim g_1-\dim\Delta_1-\dim\Delta_3$
(1$^\text{st}$ order invariants are obtained from the curvature tensor through quotient by the group
of time reparametrizations and the general linear group), $h_2=\dim g_2-\dim\Delta_4+n$ (resolution
of the stabilizer from the level of 0-jets).

This can be also justified by direct rank computation for a system of vector fields \cite{L2}.
The case $n=2$ is special: the action becomes locally free only starting from the jet-level 3,
and the additional 4-dimensional stabilizer on the level of 1-jets is resolved on the level of 3-jets.
 \end{proof}

This computation implies the formula:
 $$
P(z)=\left\{
\begin{array}{ll}
\frac{2z^4(3-2z)}{(1-z)^2}, & \text{for }n=2,\vphantom{\frac{\frac22}{\frac22}}\\
\frac{n}{(1-z)^n}\bigl(\binom{n+1}{2}-\tfrac{1+z^2}{z^2}\bigr)-
n\bigl(z^2+nz-1-\tfrac{nz+1}{z^2}\bigr), & \text{for }n>2\vphantom{\frac{\frac22}{\frac22}}.
\end{array}
\right.
 $$

\subsection{Conformal an related structures on $M^n$}\label{S34}

A conformal structure is a metric up to re-scaling by a positive
function, and so it is a section of the bundle $(S^2T^*M\setminus0_M)/\R_+$
with a non-degenerate representative at every point.
For $n=1,2$ all metrics are conformally flat, so to get local invariants we restrict to $n\ge3$.

\subsubsection{{\bf General conformal structures}}\label{4a}

As proven in \cite{L1,K3} in the case $n>3$, $h_0=h_1=0$, $h_2=\frac{n^2(n^2-1)}{12}-n^2-1$,
$h_3=\frac1{24}n(n^4+2n^3-5n^2-14n-32)$ and
$h_k=(\binom{n+1}{2}-1)\cdot\binom{n+k-1}{k}-n\cdot\binom{n+k}{k+1}=\frac{n(k-1)}2\binom{n+k-1}{k+1}-\binom{n+k-1}{k}$ for $k\ge4$. In the case $n=3$, $h_0=h_1=h_2=0$, $h_3=1$, $h_4=9$ and $h_k=k^2-4$ for $k\ge5$. This yields \cite{K3}:
 $$
P(z)=\left\{
\begin{array}{ll}
\frac{z^3(1+z)(1+5z-8z^2+3z^3)}{(1-z)^3}, & \text{ for }n=3,\vphantom{\frac{\frac22}{\frac22}}\\
\frac{(n+1)nz-2(n+z)}{2z(1-z)^n}+\frac{n}z+
\bigl(1+\tbinom{n}{2}+nz\bigr)(1-z^2), & \text{ for }n>3
\vphantom{\frac{\frac22}{\frac22}}.
\end{array}
\right.
 $$

\subsubsection{{\bf Weyl conformal structures on $M^n$}}\label{4b}

A Weyl structure is a pair consisting of a conformal structure $[g]$ and a linear connection $\nabla$ preserving it.
In terms of the representative $g$, this means $\nabla g=\omega\otimes g$ for a 1-form $\omega$ on $M$.
Conformal re-scaling of the representative $g\mapsto e^f g$ results in the shift of Weyl potential
$\omega\mapsto\omega+df$. The resulting equivalence class of pairs $(g,\omega)$ is often considered
as a Weyl structure, and we follow this agreement.

Note that $k$-jet of $(g,\omega)$ is equivalent to $k$-jet of $g$ and $(k-1)$-jet of $\nabla$.
Indeed, for the Levi-Civita connection of $g$ we have:
$\nabla-\nabla^g=\omega^\sharp\in S^2T^*_a\otimes T_a$,
where in terms of the Christoffel symbols $\gamma$ of $g$ and $\Gamma$ of $\nabla$,
the tensor $\omega^\sharp$ is given in coordinates as
 $$
\Gamma_{ij}^k-\gamma_{ij}^k=(\omega^\sharp)^k_{ij}=\tfrac12(\omega_i\delta_j^k+\omega_j\delta_i^k-g_{ij}\omega^k).
 $$

However $k$-jet of $\nabla$ yields $(k-1)$-jet of $R_\nabla$ and, by taking the skew-part of Ricci,
it yields $(k-1)$-jet of $d\omega$; using the freedom in shifting the Weyl potential by $df$, this gives
$k$-jet of $\omega$ and hence $k$-jet of $\nabla^g$ and (cf.\ subsection \ref{3c})
$(k+1)$-jet of $g$, provided that 0-jet of $g$ is known.

Thus $k$-jet of the Weyl structure $([g],\nabla)$, for $k>0$, is equivalent to the staggered jet, consisting
of $(k+1)$-jet of the metric $g$ and $k$-jet of the Weyl potential $\omega$ modulo the equivalence
$(g,\omega)\simeq(e^fg,\omega+df)$. This is the filtration we will be using in our pseudogroup orbit study.
Note that the action of $G=\op{Diff}_{\text{loc}}(M)$ on $([g],\nabla)$ has order $r=1$ in the first component
and order $r=2$ in the second component.

 \begin{prop}
We have: $h_0=0$, $h_1=\frac1{12}(n^2-4)(n^2+3)+\delta_{2,n}$, $h_2=\frac1{24}n(n^2-1)(n^2+2n+8)-\delta_{2,n}$, and
$h_k=(\binom{n+1}{2}-1)\binom{n+k}{k+1}+n\binom{n+k-1}{k}-n\binom{n+k+1}{k+2}$ for $k>2$.
 \end{prop}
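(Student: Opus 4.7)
The plan is to follow the prolongation strategy of Proposition~\ref{pLY} and its Corollary, adapted to the staggered jet description introduced just before the statement. Three pieces of data are needed: the $k$-symbol $g_k$ of the Weyl equation, the effective order $r$ of the $G$-action, and a resolution of the stabilizer tower $\op{St}^{k+r}_{a_k}$.

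Using the identification $j^k([g],\nabla)\leftrightarrow(j^{k+1}g,j^k\omega)$ modulo $(g,\omega)\sim(e^fg,\omega+df)$, the $k$-symbol fits in a short exact sequence
\[
0\to S^{k+1}T^*_a\to(S^{k+1}T^*_a\ot S^2T^*_a)\oplus(S^kT^*_a\ot T^*_a)\to g_k\to0,
\]
whose leftmost map sends the $(k+1)$-symbol $\phi$ of the gauge function $f$ to $(\phi\ot g_a,\d\phi)$, where $\d$ is the Spencer differential. A dimension count yields $\dim g_k=(\binom{n+1}{2}-1)\binom{n+k}{k+1}+n\binom{n+k-1}{k}$. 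Since both $g$ and $\omega$ transform with order $1$, while the $k$-jet of the Weyl structure packages the $(k+1)$-jet of $g$, the effective $G$-action order is $r=2$ and $\dim\Delta_{k+2}=n\binom{n+k+1}{k+2}$.

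For the stabilizer resolution, the conformal symbolic system has $\g_1=\mathfrak{co}(g)$, $\g_2\simeq T^*_a$, and $\g_i=0$ for $i\ge3$ when $n\ge3$; the Corollary following Proposition~\ref{pLY} then forces $\op{St}^{k+2}_{a_k}$ to inject into $\op{St}^3_{a_1}$ for all $k\ge1$. I would next verify that for a generic 2-jet the combined data consisting of the Weyl tensor of $g$, the conformal Schouten part, and the Faraday 2-form $d\omega$ has trivial joint stabilizer in the conformal jet-group $CO(g)\ltimes T^*_a$, so the action becomes locally free from jet-level $2$. Then for $k>2$ the identity $h_k=\dim g_k-\dim\Delta_{k+2}$ recovers exactly the stated expression, while for $k\leq 2$ the collected stabilizer contributions yield $h_0=0$ and the stated values of $h_1,h_2$. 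The Kronecker corrections $\pm\delta_{2,n}$ record the $n=2$ degeneracy, where the Weyl tensor vanishes identically and one extra jet is needed to kill the isotropy, shifting one unit of invariant from $h_2$ to $h_1$.

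The hard part is the finite-dimensional representation-theoretic check: verifying that for generic Weyl 2-jets, the action of $CO(g)\ltimes T^*_a$ on the pair (Weyl curvature, $d\omega$) has trivial stabilizer in dimension $n\ge3$. This dimension-by-dimension verification — ultimately the source of the $\delta_{2,n}$ corrections — is where any slips in the bookkeeping would occur and is the only step that cannot be reduced to routine symbol arithmetic.
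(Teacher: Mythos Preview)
Your symbol computation and the overall architecture (Proposition~\ref{pLY} plus stabilizer resolution) match the paper's approach, and your $\dim g_k$ and $\dim\Delta_{k+2}$ are correct, so the formula for $h_k$ with $k>2$ goes through. The gap is in the stabilizer analysis.

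You invoke the \emph{conformal} symbolic system $\g_1=\mathfrak{co}(g)$, $\g_2\simeq T^*_a$, but this is the prolongation for the bare conformal structure $[g]$, not for the Weyl pair $([g],\nabla)$. In the staggered filtration the $0$-jet already contains the connection $\Gamma_a$, and $\Delta_2=S^2T^*_a\ot T_a$ acts on connections by translation, hence freely. Consequently $\op{St}^2_{a_0}\cap\Delta_2=0$ and the correct symbolic kernel has $\g_2=0$; the actual $0$-jet stabilizer is $\op{St}^2_{a_0}=CO(n)$, not $CO(g)\ltimes T^*_a$. This is precisely what the paper uses.

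Because of this, the relevant check is one level lower than you place it: for $n>2$ one must show that $CO(n)$ acts with trivial stabilizer on the full curvature tensor $R_\nabla$ (which lives in the $1$-jet of the Weyl structure), giving $\op{St}^3_{a_1}=0$ and local freeness from jet-level~$1$. Your proposed ``hard part'' --- checking a trivial stabilizer at the $2$-jet level for $CO(g)\ltimes T^*_a$ acting on $(W_g,\text{Schouten},d\omega)$ --- would only yield $\op{St}^4_{a_2}=0$, which does not by itself determine $\dim\op{St}^3_{a_1}$ and hence does not pin down $h_1$ and $h_2$ separately. The paper's direct route gives $h_1=\dim g_1-\dim\Delta_3-\dim CO(n)$ and $h_k=\dim g_k-\dim\Delta_{k+2}$ already for $k\ge2$, from which the stated values (and the $n=2$ shift, where $\op{St}^3_{a_1}\cong O(2)$ survives one more step) follow immediately.
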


 \begin{proof}
It is easy to see that the group $\Delta_1\oplus\Delta_2$ acts transitively on
the 0-jets $([g],\Gamma)$. The stabilizer is $\op{St}^2_{a_0}=CO(n)\subset\Delta_1$.
The action of $\Delta_3$ on 1-jet of $\nabla\equiv\Gamma$ is free and the previous stabilizer $CO(n)$
is resolved upon the action on the space of curvature tensors $\{R_\nabla\}$, so that $\op{St}^3_{a_1}=0$ for
$n>2$; in the case $n=2$ the scalar part of $CO(2)=\R^*\times SO(2)$ is reduced to $\Z_2$, while the
rotation persists: $\op{St}^3_{a_1}=O(2)$; both are resolved on the next jet-level: $\op{St}^4_{a_2}=0$.

Thus the action is (locally) free from the level of 1-jets for $n>2$ and $2$-jets for $n=2$.
This allows computing the counting function $h_k$. Indeed, the symbol space $g'$ for conformal structures
satisfies $\dim g'_k=(\binom{n+1}{2}-1)\binom{n+k-1}{k}$, and the symbol space $g''$ for Weyl potentials (the
scaling factor $f$ is taken into consideration when counting $g'_k$) satisfies $\dim g''_k=n\binom{n+k-1}{k}$.
Thus for $k=1$ we compute $h_1=\dim g'_2+\dim g''_1-\dim\Delta_3-\dim CO(n)$; note also that this equals
the dimension of the space of the curvature tensors of $\nabla$ (counting also the skew-part of Ricci)
mod stabilizer group action: $h_1=\frac{n^2(n^2-1)}{12}+\binom{n}2-\dim CO(n)=\frac1{12}(n^2-4)(n^2+3)$.
For $k>1$ we get $h_k=\dim g'_{k+1}+\dim g''_k-\dim\Delta_{k+2}$. In the case $n=2$ the number $h_1$ shall be
increased by 1 and $h_2$ decreased by 1.
 \end{proof}

This computation yields for $n>1$ the formula:
 $$
P(z)=\frac{nz^2+(\binom{n+1}{2}-1)z-n}{z^2(1-z)^n}-\frac{(\binom{n}{2}+1)z(z^2-1)-n}{z^2}-\delta_{2,n}z(z-1).
 $$

\subsubsection{{\bf Einstein-Weyl structures on $M^n$}}\label{4c}

The Einstein-Weyl condition is the following set of $\binom{n+1}2-1$ equations $\op{Ric}^{\op{sym}}_\nabla=\Lambda g$,
where $\Lambda=\frac1n\op{Tr}_g\op{Ric}^{\op{sym}}_\nabla$, on the unknown Weyl structure $[(g,\omega)]$.
This condition is vacuous for $n=2$, so we assume $n>2$ for this structure.

 \begin{prop}
We have: $h_0=0$, $h_1=\frac1{12}(n-3)n(n+1)(n+2)+\delta_{n,3}$,
$h_2=\frac1{24}n(n-1)(n-2)(n^2+5n+8)-\delta_{n,3}$, and
$h_k=(\binom{n+1}{2}-1)\binom{n+k}{k+1}+n\binom{n+k-1}{k}-(\binom{n+1}{2}-1)\binom{n+k-2}{k-1}-n\binom{n+k+1}{k+2}$
for $k>2$.
 \end{prop}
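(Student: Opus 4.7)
The plan is to adapt the computational scheme for Weyl conformal structures above, tracking two new ingredients: the Einstein-Weyl equation is of order one in the Weyl filtration and its $(k-1)$-fold prolongation modifies the symbol counts, while the low-dimensional stabilizer analysis must be revisited for $n=3$.

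First, I retain the staggered filtration $j^k([g],\nabla)\leftrightarrow(j^{k+1}g,j^k\omega)$ modulo $(g,\omega)\sim(e^fg,\omega+df)$, in which the symbols of Weyl structures decompose as $g_k=g'_{k+1}\oplus g''_k$ with $\dim g'_{k+1}=(\binom{n+1}{2}-1)\binom{n+k}{k+1}$ and $\dim g''_k=n\binom{n+k-1}{k}$. Since $\op{Ric}^{\op{sym}}_\nabla$ depends on 1-jet of $\Gamma$ and hence on 2-jet of $g$ and 1-jet of $\omega$, the Einstein-Weyl equation $\op{Ric}^{\op{sym}}_\nabla=\Lambda g$ is of order one in the Weyl filtration and imposes $\binom{n+1}{2}-1$ independent scalar conditions (the trace being absorbed into $\Lambda$).

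Second, I verify formal integrability of the Einstein-Weyl equation from this level on, by a Spencer $\d$-cohomology computation analogous to the one used in \ref{2b}. Granting vanishing of the relevant $H^{*,*}$, the $(k-1)$-fold prolongation of the symbol cuts the expected codimension, yielding
$$
\dim g^{EW}_k=\dim g'_{k+1}+\dim g''_k-(\tbinom{n+1}{2}-1)\tbinom{n+k-2}{k-1}
$$
for $k\ge2$. Combined with $h_k=\dim g^{EW}_k-\dim\Delta_{k+2}$, which holds once the action is locally free, this produces the asserted formula for $k>2$.

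Third, I run the stabilizer analysis parallel to the Weyl case. The group $\Delta_1\oplus\Delta_2$ acts transitively on 0-jets with stabilizer $\op{St}^2_{a_0}=CO(n)$. For $n>3$ the skew-Ricci and the Weyl-type piece of the curvature of $\nabla$ provide generic data on which $CO(n)$ acts with trivial isotropy, so $\op{St}^3_{a_1}=0$ and the action is locally free from 1-jets; the values $h_1$ and $h_2$ then follow from the general formula with the correction $+n$ coming from resolving $CO(n)$ on the level of 1-jets. In dimension $n=3$ the Weyl tensor of $[g]$ vanishes identically and the Einstein-Weyl constraint kills the generic $CO(3)$-component of the curvature of $\nabla$ as well, leaving a one-dimensional residual isotropy which is resolved only at the level of 2-jets via the Cotton-type invariant entering at order 3 of $g$; this accounts for the $+\delta_{n,3}$ in $h_1$ and the compensating $-\delta_{n,3}$ in $h_2$.

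The hard part is step two: unlike the Einstein case where the Ricci operator admits the classical Bianchi resolution, the Einstein-Weyl symbol couples contributions from $g$ and $\omega$, so establishing that the Spencer $\d$-cohomology of the resulting mixed symbol vanishes in the required bidegree range demands a careful explicit acyclicity argument. The $n=3$ case additionally requires verifying by hand that the residual $CO(3)$-action on the first-jet Einstein-Weyl data is precisely one-dimensional and is eliminated on passage to 2-jets.
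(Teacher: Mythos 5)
Your plan is essentially the paper's own route: the staggered filtration with $\dim g'_{k+1}=(\binom{n+1}{2}-1)\binom{n+k}{k+1}$, $\dim g''_k=n\binom{n+k-1}{k}$, subtraction of the $(k-1)$-st prolongation $r_{k-1}=(\binom{n+1}{2}-1)\binom{n+k-2}{k-1}$ of the Einstein--Weyl system, local freeness from the 1-jet level for $n>3$, and a one-dimensional residual stabilizer at 1-jets for $n=3$ (the paper identifies it as $O(2)\subset CO(3)$, the isotropy of the surviving curvature data $\op{Tr}\op{Ric}^{\op{sym}}_\nabla$ and $d\omega$) resolved at 2-jets, giving the $\pm\delta_{n,3}$ corrections. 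Two remarks. First, your description of the low-order bookkeeping is wrong: resolving $\op{St}^2_{a_0}=CO(n)$ at the 1-jet level contributes $-\dim CO(n)=-\bigl(\binom{n}{2}+1\bigr)$ to $h_1$, not $+n$; the paper computes $h_1=\frac{n^2(n^2-1)}{12}+\binom{n}{2}-r_0-\dim CO(n)$, which does give $\frac1{12}(n-3)n(n+1)(n+2)$, whereas a literal $+n$ correction would not reproduce the stated value (there is no prolonged stabilizer of dimension $n$ here --- the conformal prolongation $T^*_a$ is already absorbed into the gauge equivalence $(g,\omega)\sim(e^fg,\omega+df)$, so $\op{St}^2_{a_0}$ sits entirely in $\Delta_1$). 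Second, the acyclicity you defer as ``the hard part'' is not settled in the paper by a Spencer $\delta$-cohomology computation either: the paper simply observes that the Einstein--Weyl system is determined modulo the diffeomorphism freedom (citing \cite{DFK} for $n=3$), so its prolongations cut the expected codimension $r_k$; if you want a self-contained argument you would indeed have to supply the mixed-symbol resolution yourself, but for parity with the paper the determinacy remark suffices.
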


 \begin{proof}
There are two important specifications in this case. First, the
structures are given by a differential system $\E$ on $[(g,\omega)]$. It consists of $\binom{n+1}{2}-1$ equations
of the second order. This system is determined (not as it stands, because it has more dependent variables
$\binom{n+1}{2}+n-1$ than the equations, but determinacy comes modulo the diffeomorphism freedom; see \cite{DFK}
for an effective quotient in the arguably most important case $n=3$), so its prolongation will
have $r_k=(\binom{n+1}{2}-1)\binom{n+k-1}k$ equations of order $k+2$ on $g$.

Second, we still have $\op{St}^2_{a_0}=CO(n)$ but the stabilizer $\op{St}^3_{a_1}$ changes.
While for $n>3$ the curvature of $\nabla$ contains the Weyl tensor as an irreducible part (reducible
into anti- and self-dual parts for $n=4$) and the action of $CO(n)$ resolves on it,
in the case $n=3$ the curvature of $\nabla$ consists of the trace part of $\op{Ric}_\nabla^{\text{sym}}$
(due to Einstein-Weyl condition) and $\op{Ric}_\nabla^{\text{skew}}\equiv d\omega$ thus reducing
$\op{St}^2_{a_0}=CO(3)$ to $\op{St}^3_{a_1}=O(2)$, and in the next jet-level this stabilizer is also resolved.
Thus we conclude $\dim\op{St}^3_{a_1}=\delta_{n,3}$ and $\dim\op{St}^{k+2}_{a_k}=0$ for $k\ge2$.

These two observations imply: $h_1=\frac{n^2(n^2-1)}{12}+\binom{n}2-r_0-\dim CO(n)=\frac1{12}(n-3)n(n+1)(n+2)$.
For $k>1$ we get: $h_k=\dim g'_{k+1}+\dim g''_k-r_{k-1}-\dim\Delta_{k+2}$. In the case $n=3$ the number $h_1$ shall be
increased by 1 and $h_2$ decreased by 1, implying the claim.
 \end{proof}

This computation yields for $n>2$ the formula:
 $$
P(z)=
\left\{
\begin{array}{ll}
\frac{z(1+5z-z^2-z^3)}{(1-z)^2}, & \text{ for }n=3,\vphantom{\frac{\frac22}{\frac22}}\\
\frac{nz^2-(\binom{n+1}{2}-1)z(z^2-1)-n}{z^2(1-z)^n}-\frac{(\binom{n}{2}+1)z(z^2-1)-n}{z^2},
& \text{ for }n>3 \vphantom{\frac{\frac22}{\frac22}}.
\end{array}
\right.
 $$

 \begin{rk}
For $n=3$ computation of the Poincar\'e function in a different jet-filtration for Weyl and Einstein-Weyl
structures was done recently in \cite{KS2}. The results differ from the above, but agree in asymptotic.
This is an effect of ``staggering'' jets or ``normalizing'' the structure. 
 \end{rk}

\subsubsection{{\bf Self-dual conformal structures in 4D}}\label{4d}

Self-duality equation $*W_g=W_g$ for the Weyl tensor of metric $g$ (of Riemannian or neutral signature)
has meaning only in dimension 4. The count for differential invariants for self-dual conformal structures $[g]$
on $M^4$ was performed in \cite{KS}: $h_0=h_1=0$, $h_2=1$, $h_3=13$,
$h_k=3k^2-7$ for $k>3$ (note the difference with subsection \ref{2c}, where we considered $g$ intead of $[g]$).
Consequently the Poincar\'e function is \cite{KS}:
 $$
P(z)=\frac{z^2(1+10z+5z^2-17z^3+7z^4)}{(1-z)^3}.
 $$

\subsection{Almost complex structures on $M^{2n}$}\label{S35}

This structure of order $r=1$ is given by a field $J\in\op{End}(TM)$ with $J^2=-\bf{1}$.
This is the first non-trivial example of an infinite type geometric structure, meaning that its symbol
allows infinite-dimensional symmetry algebra, which is realized for the standard (integrable) complex
structure on $\C^n$, though generic almost complex structures have no local symmetry at all \cite{K0.1}.

In more details, $J$ is a $G$-structure with the group $G=\op{GL}(n,\C)$, whose Lie algebra $\g=\mathfrak{gl}(n,\C)$
has infinite type: its prolongation is the algebra $S(\C^n)^*\ot_\C\C^n$ of formal holomorphic vector fields at $0$.
However the prolongation-projection of the Lie equation for $J$ encodes conservation of both $J$ and its
Nijenhuis tensor $N_J$. For $n>2$ this is already a finite type structure in general. But for $n=2$ it is
still of infinite type, and one has to do one more prolongation-projection to achieve finite type.

 \begin{theorem}
For almost complex structure the count of invariants is as follows: $h_0=0$ and
$h_k=2n^2\binom{2n+k-1}k-2n\binom{2n+k}{k+1}+2n\binom{n+k}{k+1}-2n\binom{n+k-1}{k}+2(\delta_{k,1}-\delta_{k,2})\delta_{n,3}$,
for $k>0$, $n\ge3$. In the case $n=2$, $h_0=h_1=0$, $h_2=2$ and $h_k=8\binom{k+3}k-4\binom{k+4}{k+1}+4$.
 \end{theorem}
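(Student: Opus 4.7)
The plan is to apply the framework of Section~\ref{S2}. An almost complex structure $J$ is a first-order ($r=1$) tensorial structure whose fiber $E_a\simeq GL(2n,\R)/GL(n,\C)$ has real dimension $2n^2$, so $\dim g_k=2n^2\binom{2n+k-1}{k}$. The stabilizer Lie algebra $\mathfrak{g}_1=\mathfrak{gl}(n,\C)\subset\mathfrak{gl}(2n,\R)$ is classically of infinite type, and its Cartan--Kuranishi prolongations are $\mathfrak{g}_{k+1}\simeq S^{k+1}(\C^n)^*\otimes_\C\C^n$ (formal holomorphic vector fields of degree $k+1$, of real dimension $2n\binom{n+k}{k+1}$). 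By Proposition~\ref{pLY} the $\Delta_{k+1}$-orbit inside the fiber $g_k$ has dimension $\dim\Delta_{k+1}-\dim\mathfrak{g}_{k+1}=2n\binom{2n+k}{k+1}-2n\binom{n+k}{k+1}$, which together with $\dim g_k$ accounts for the first three terms of the claimed formula.

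The fourth term $-2n\binom{n+k-1}{k}=-\dim\mathfrak{g}_k$ arises from the stabilizer analysis. Using $h_k=(\dim g_k-\dim\Delta_{k+1})+(\dim\op{St}^{k+1}_{a_k}-\dim\op{St}^k_{a_{k-1}})$ and the exact sequence $0\to\mathfrak{g}_{k+1}\to\op{St}^{k+1}_{a_k}\to\op{St}^k_{a_{k-1}}$, the task reduces to showing that for generic $a_\infty$ and $n\ge3$ the projection on the right has trivial image once $k$ is large, so that $\op{St}^{k+1}_{a_k}\simeq\mathfrak{g}_{k+1}$ and the stabilizer difference equals $\dim\mathfrak{g}_{k+1}-\dim\mathfrak{g}_k$. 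The geometric input is the Nijenhuis tensor $N_J\in\Lambda^{0,2}T^*_a\otimes T^{1,0}_a$ of real dimension $n^2(n-1)$: for $n\ge4$ the count $n^2(n-1)>2n^2=\dim_\R GL(n,\C)$ together with an explicit orbit argument gives an open generic $GL(n,\C)$-orbit with trivial stabilizer, so the prolongation-projected Lie equation is of finite type after one step and the stabilizer of higher jets is forced to coincide with $\mathfrak{g}_{k+1}$. For $n=3$ the generic $GL(3,\C)$-stabilizer of $N_J$ has real dimension $2$ and is resolved on the next jet-level via $\nabla N_J$, producing the discrete correction $2(\delta_{k,1}-\delta_{k,2})$.

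For $n=2$ one has $\Lambda^{0,2}T^*_a\otimes T^{1,0}_a\simeq T^{1,0}_a$, on which $GL(2,\C)$ acts transitively on nonzero vectors with $4$-real-dimensional affine stabilizer, so the first Nijenhuis reduction is insufficient and the reduced structure is still of infinite type. The plan is to carry out a second prolongation-projection using $\nabla N_J$, reducing the residual $4$-dimensional affine group to a persistent $2$-dimensional subgroup and performing a Spencer $\delta$-cohomology computation on the staggered symbol obtained from this double reduction. This yields $\dim\op{St}^{k+1}_{a_k}=\dim\mathfrak{g}_{k+1}+2$ for $k\ge2$, producing $h_0=h_1=0$, $h_2=2$ (the first genuine invariant appearing after two rounds of reduction), and the asymptotic formula $h_k=8\binom{k+3}{k}-4\binom{k+4}{k+1}+4$ for $k\ge3$, where the constant $+4=\dim\mathfrak{g}_{k+1}-\dim\mathfrak{g}_k$ matches the $n=2$ formal holomorphic prolongation count.

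The main obstacle is the $n=2$ case: after the first Nijenhuis reduction the structure remains of infinite type, and one must identify the new torsion invariants of the once-reduced affine $G$-structure, perform the second prolongation-projection using $\nabla N_J$, and verify $\delta$-acyclicity of the doubly-reduced staggered symbolic system from a definite jet level onward. For $n\ge4$ the argument is essentially formal once the Nijenhuis codimension count is checked, while for $n=3$ the explicit representation-theoretic description of the $2$-dimensional Nijenhuis stabilizer and its resolution at the next prolongation must be worked out.
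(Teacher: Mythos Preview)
Your plan is essentially the paper's approach: compute $h_k=(\dim g_k-\dim\Delta_{k+1})+(\dim\op{St}^{k+1}_{a_k}-\dim\op{St}^k_{a_{k-1}})$ and control the stabilizers by prolongation--projection through the Nijenhuis tensor (once for $n\ge3$, twice for $n=2$). One slip to correct: for $n\ge4$ you write ``open generic $GL(n,\C)$-orbit with trivial stabilizer'', but since the Nijenhuis space has real dimension $n^2(n-1)>2n^2=\dim_\R GL(n,\C)$ there is no open orbit; what you actually need, and what the paper establishes by exhibiting an explicit $N_J$, is that the generic stabilizer is \emph{finite} (in fact $\Z_3$, coming from the scaling $e^{2\pi i/3}$), whence $\tilde{\mathfrak g}_1=0$ and the reduced structure is of finite type. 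For $n=2$ the paper's second reduction is phrased via the complete parallelism attached to $(J,N_J,[\Pi,\Pi])$ with $\Pi=\op{Im}N_J$, rather than via $\nabla N_J$, and several low-order stabilizer dimensions (e.g.\ $\dim\op{St}^2_{a_1}=38$ for $n=3$, and the sequence $8,16,18,22,26,30,\dots$ for $n=2$) are verified by direct Maple computation rather than by a Spencer $\delta$-cohomology argument; your proposed acyclicity verification may end up being more laborious than the brute-force checks the paper uses at those finitely many jet levels.
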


 \begin{proof}
We will do this computation in several steps.

First note that $\op{St}^1_{a_0}\simeq G=\op{GL}(n,\C)$.
Since $J$ is a $G$-structure with $\g=\op{Lie}(G)\subset\op{End}(T_a)$,
Proposition \ref{pLY} shows that $\op{St}^{k+1}_{a_k}\supset\g_{k+1}=\mathfrak{g}^{(k)}=S^{k+1}_\C T^*_a\ot_\C T_a$.
The latter space is
 $$
\{\Phi:S^{k+1}T_a\to T_a,\Phi(X_0,\dots,JX_i,\dots,X_k)=J\Phi(X_0,\dots,X_i,\dots,X_k)\}
 $$
and has (real) dimension $2n\binom{n+k}{k+1}$.
The analog of freeness from the jet-level $k$ is the equality in the previous inclusion.

The equation for symmetries of $J$ has in 1-prolongation the condition that $N_J$ is preserved.
Consequently, the corresponding symbolic system
 $$
\tilde{\mathfrak{g}}=\{\Phi:T_a\to T_a:J\Phi=\Phi J,N_J(\Phi\cdot,\cdot)+N_J(\cdot,\Phi\cdot)=\Phi N_J(\cdot,\cdot)\}
 $$
is of finite type for generic $N_J$ and $n\ge3$. Indeed, consider the bundle $\tilde\E$ consisting of the pair $J$
and its Nijenhuis tensor $N_J$, namely $\tilde{a}_0=(J,N_J)$. The tensor $N_J$ involves 1-jet of $J$,
yet the action of the pseudogroup $\op{Diff}_\text{loc}(M)$ on $\tilde\E$ still has order $r=1$.
Let $\widetilde{\op{St}}^{k+1}_{\tilde{a}_k}\subset\DD^{k+1}_a$ denote the corresponding stabilizers.
 \begin{lem}
For $n>3$ and generic $\tilde{a}_0$ (i.e.\ generic 1-jet of $J$) we have:
 $$
\widetilde{\op{St}}^1_{\tilde{a}_0}=\{\Phi\in\op{End}(T_a,J):N_J(\Phi\cdot,\Phi\cdot)=\Phi N_J(\cdot,\cdot)\}=\Z_3.
 $$
 \end{lem}

 \begin{proof}
Identifying $(T_a,J)$ with $\C^n$ observe that scaling by $e^{2\pi i/3}$ always belongs to
$\widetilde{\op{St}}^2_{\tilde{a}_0}$. To show that generically there are no other symmetries, note that
the subgroup $\widetilde{\op{St}}^2_{\tilde{a}_0}\subset\op{GL}(n,\C)$ is upper-semicontinuous in
$N_J\in\La^2(\C^n)^*\otimes_{\bar\C}\C^n$, so if we show the claim for one $N_J$ it will follow for a
Zariski generic element as well.

Consider the following element given in terms of a complex basis $e_1,\dots,e_n\in\C^n$:
 \begin{multline*}
N_J(e_1,e_k)=e_{k+1}\ (2\le k<n),\ N_J(e_1,e_n)=e_2,\\
N_J(e_2,e_3)=e_1,\ N_J(e_2,e_k)=ke_k\ (3<k\leq n).
 \end{multline*}
A moment of thought shows that the only complex transformations preseving this $N_J$ are diagonal, i.e.\
$\Phi(e_k)=\rho_ke^{i\theta_k}e_k$, where $\rho_k\in\R_+$ and $\theta_k\in\R\!\!\mod\!2\pi$. The first line
of the defining relations yields $\rho_1=1$, $\rho_2=\dots=\rho_n$, and then we get that all $\rho_k=1$.
After this it is easy to obtain $\theta_k=\theta\in\{0,2\pi/3,4\pi/3\}$.
 \end{proof}
Thus, for $n>3$ we have $\tilde{\mathfrak{g}}_1=0$, and hence $\tilde{\mathfrak{g}}_k=0$ for $k>1$.

For $n=3$ the normal forms of \cite{K0.1} yield $\dim\tilde{\mathfrak{g}}_1=2$ for generic $N_J$ and a
straightforward computation shows that $\tilde{\mathfrak{g}}_2=0$.

 \begin{prop}\label{acs2}
In the case $n>3$ we have $\op{St}^k_{a_{k-1}}=\g_k$ for $k>1$ and generic $a_{k-1}\in\E^{k-1}$.
When $n=3$ this equality holds true as well except for $k=2$, in which case
we have $\dim\op{St}^2_{a_1}/\g_2=2$ for generic $a_1\in\E^1$.
  \end{prop}

 \begin{proof}
If a diffeomorphism $\vp$ preserves the $k$-jet of $J$, then it preserves $(k-1)$-jet of $(J,N_J)$.
Hence an injective map $\op{St}^{k+1}_{a_{k}}/\g_{k+1}\to \widetilde{\op{St}}^k_{\tilde{a}_{k-1}}$
(in fact, an isomorphism). These can be united into a commutative diagram
 $$\begin{CD}
\op{St}^{k+1}_{a_{k}}/\g_{k+1} @>>> \op{St}^{k}_{a_{k-1}}/\g_{k}\\
 @VVV @VVV \\
\widetilde{\op{St}}^k_{\tilde{a}_{k-1}} @>>>  \widetilde{\op{St}}^{k-1}_{\tilde{a}_{k-2}}
 \end{CD}$$
By Proposition \ref{pLY} the kernel of the bottom map is
$\widetilde{\op{St}}^k_{\tilde{a}_{k-1}}\cap\Delta_k=\tilde\g_k$, and so
by induction $\widetilde{\op{St}}^k_{\tilde{a}_{k-1}}=0$ for $n>3$, implying the first claim.

For $n=3$ the upper arrow of the diagram is injective. Moreover it can be directly checked
(for instance, via the normal forms of \cite{K0.1}) that
$\widetilde{\op{St}}^2_{\tilde{a}_1}=0$, and so $\op{St}^{k+1}_{a_k}=\g_{k+1}$ for $k\ge2$.
The same equality fails for $k=1$. A straightforward computation in Maple gives $\dim\op{St}^2_{a_1}=38$,
while $\dim\g_2=36$, implying the second claim.
 \end{proof}

The space of all almost complex structures $\E$ is the fiber bundle with fiber $F=GL(2n,\R)/GL(n,\C)$
of dimension $2n^2$, and the $k$-symbol is $g_k=S^kT^*M\ot TF$. Consequently, the number of pure order $k>0$
differential invariants for $n>3$ is (for $k=0$ we have $h_0=0$):
 \begin{multline*}
h_k=\dim g_k-\dim\Delta_{k+1}+\dim\g_{k+1}-\dim\g_k\\
=2n^2\binom{2n+k-1}k-2n\binom{2n+k}{k+1}+2n\binom{n+k}{k+1}-2n\binom{n+k-1}{k}.
 \end{multline*}
For $n=3$ we have modification $h_1=2$, $h_2=64$, i.e.\ $h_k\mapsto h_k+2(\delta_{k,1}-\delta_{k,2})\delta_{n,3}$
due to existence of 2 scalar invariants of order 2 \cite{K0.1}.

In the case $n=2$ the situation is more complicated: $N_J$ is encoded by $J$-invariant
subspace ($\C$-line) $\Pi_a=\op{Im}N_J\subset T_a$ and an element of a $\C$-line
$(T_a/\Pi_a)^*\ot_\C\op{End}_{\bar\C}(\Pi_a)$.
Thus the system $\tilde\g$, obtained by prolongation-projection is not of finite type:
$\tilde\g_{k+1}=\tilde\g^{(k)}$ satisfies $\dim\tilde\g_{k+1}=2$ for all $k>0$, while $\dim\tilde\g_1=4$.
Indeed, we have in a complex basis $X_1\in\Pi_a$, $X_2\in T_a\setminus\Pi_a$ of $T_a$:
 $$
\tilde\g=\left\{\Phi:(T_a,J)\to(T_a,J)\,|\,
\Phi=\begin{pmatrix}\rho-i\theta & b \\ 0 & 2i\theta\end{pmatrix},\rho,\theta\in\R,b\in\C\right\}.
 $$
Therefore the prolongation $\tilde\g_k\subset S^k_\C T^*_a\ot_\C T_a$ consists of elements
$\Phi_k$ with $i_v\Phi_k=0$ $\forall v\in\Pi_a$ and $\op{Im}\Phi_k\in\Pi_a$. In other words, for $k>1$
$\Phi_k\in S^k_\C(T_a/\Pi_a)\ot_\C\Pi_a$ and the latter space has real dimension 2.

The next prolongation-projection is encoded by a complete parallelism,
namely the points of $\tilde{\tilde\E}$ are frames related to $(J,N_J,[\Pi,\Pi])$ \cite{K0.1}
(so determined by the 2-jet of $J$), whence $\tilde{\tilde\g}_k=0$ for $k>0$.
 \begin{prop}\label{acs3}
For $n=2$ and $k\ge3$ the map $\op{St}^{k+1}_{a_{k}}/\g_{k+1}\to\op{St}^{k}_{a_{k-1}}/\g_{k}$ is an isomorphism
and the spaces have dimensions 2.
 \end{prop}

 \begin{proof}
By the argument from the proof of Proposition \ref{acs2} we have an injective map
$\widetilde{\op{St}}^{k+1}_{\tilde{a}_k}/\tilde\g_{k+1}\to\widetilde{\op{St}}^k_{\tilde{a}_{k-1}}/\tilde\g_k$
for $k>0$ and moreover the source spaces vanish implying
$\widetilde{\op{St}}^k_{\tilde{a}_{k-1}}=\tilde\g_k$ for $k>1$. Now we can use the commutative diagram from
the proof of Proposition \ref{acs2} again. It implies that $\op{St}^{k+1}_{a_k}$ contains both $\g_{k+1}$ and
$\tilde\g_k$, and the stabilization means that nothing more contributes.

We conclude that the parts $\g_k$, $\tilde\g_{k-1}$ of the stabilizer $\op{St}^k_{a_{k-1}}$ resolve upon
prolongation to $k$-jets $\E^k$, but in the new stabilizer $\op{St}^{k+1}_{a_k}$ the parts $\g_{k+1}$, $\tilde\g_k$
appear instead. This proves the claim.
 \end{proof}

For a generic $a_\infty=\{a_k\}\in\E$ the sequence $\{\dim\op{St}^{k+1}_{a_k}\}_{k=0}^\infty$ is equal to
$\{8,16,18,22,26,30,\dots\}$ by a straightforward (albeit very demanding) Maple computation.
Its grows stabilizes starting from $k=3$ in accordance with Proposition \ref{acs3}.
This implies the following dimension formulae for $n=2$ and $k>2$:
 \begin{multline*}
h_k=\dim g_k-\dim\Delta_{k+1}+\dim\op{St}^{k+1}_{a_k}-\dim\op{St}^k_{a_{k-1}}\\
=\dim g_k-\dim\Delta_{k+1}+\dim\g_{k+1}-\dim\g_k+\dim\tilde\g_{k}-\dim\tilde\g_{k-1}\\
=8\binom{k+3}k-4\binom{k+4}{k+1}+4=\frac23k^3+2k^2-\frac83k-4.
 \end{multline*}
For $k\leq2$ we have: $h_0=h_1=0$, but $h_2=2$ (as a straightforward Maple computation verifies).
This finishes proof of the theorem.
 \end{proof}

Let us list the numbers of the pure order $k$ invariants for the first $n$:
 \begin{center}
\begin{tabular}{c||c|c|c|c|c|c|c|c}
 & $h_0$ & $h_1$ & $h_2$ & $h_3$ & $h_4$ & $h_5$ & $h_6$ & \dots\\
\hline
$n=2$ & 0 & 0 & 2 &    24 &   60 &   116 &   196 & \dots \\
\hline
$n=3$ & 0 & 2 & 64 &   282 &  792 &  1806 &  3612 & \dots \\
\hline
$n=4$ & 0 & 16 & 272 & 1320 & 4392 & 11840 & 27744 & \dots
\end{tabular}
 \end{center}

\medskip

The formulae of the theorem are encoded via the Poincar\'e function:
 $$
P(z)=\left\{
\begin{array}{ll}
\frac{2z^2(1+8z-12z^2+6z^3-z^4)}{(1-z)^4}, & \text{ for }n=2,\vphantom{\frac{\frac22}{\frac22}}\\
\frac{2z(1+26z-36z^2+10z^3+17z^4-18z^5+7z^6-z^7)}{(1-z)^6}, & \text{ for }n=3,\vphantom{\frac{\frac22}{\frac22}}\\
\frac{2n(nz-1)}{z(1-z)^{2n}}+\frac{2n}{z(1-z)^{n-1}}
+2n, & \text{ for }n>3
\vphantom{\frac{\frac22}{\frac22}}.
\end{array}
\right.
 $$

\section{Conclusion: Towards the general Arnold conjecture}\label{S4}

By the Hilbert-Serre theorem, the Poincar\'e series of a finitely generated graded module
over an algebra with homogeneous generators of degrees $d_1,\dots,d_n$ has the form
(see \cite{Sp}, also for many examples)
 $$
P(z)=\frac{F(z)}{\prod_{i=1}^n(1-z^{d_i})}.
 $$
This is more general than the one given by formula \eqref{PR}. Indeed, the poles are on the unit circle
$S^1\subset\C$, but can be other roots of unity.

More general Poincar\'e functions arise in the problems of analysis when the pseudogroup $G$ acts
non-transitively on the base. This is the case in singularity theory. For instance,
the pseudogroup of symplectomorphisms $G=\op{Diff}_\text{loc}(\R^{2n},\omega)$ acting on the space of
germs of critical linearly stable Hamiltonians ($\op{Sp}\equiv$ spectrum)
 $$
\E=\{H\in C^\infty_\text{loc}(\R^{2n},0):H(0)=0,d_0H=0,\op{Sp}(\omega^{-1}d^2_0H)\subset i\R\}
 $$
was considered in \cite{KL}: the corresponding Poincar\'e function on the general stratum
has multiple poles at $\pm1$:
 $$
P(z)=\frac1{(1-z^2)^{n}}.
 $$

Another classical problem is related to the Poincar\'e-Dulac normal form for a vector field $v$ near
stationary point $0\in\R^n(x)$, $v(0)=0$. Let $d_0v$ 
have spectrum $\Lambda=(\lambda_1,\dots,\lambda_n)$.
Then $v$ is formally equivalent to a vector field $w$ with components (no summation by $i$)
 $$
w^i=\lambda^i x_i+\sum_{m\in R_i(\Lambda)}c^i_m x^m,
 $$
where $R_i(\Lambda)=\{m\in(\mathbb{Z}_{\ge0})^n:m_i=\langle m,\lambda\rangle,|m|=\sum m_i\ge2\}$
is the $i$-th resonance set and
$x^m=x_1^{m_1}\cdots x_n^{m_n}$, see \cite{A2}. Some of the coefficients can be further normalized
leaving only the essential ones.

Clearly, $\op{Sp}(d_0v)=\op{Sp}(d_0w)=\Lambda$ and the normalized coefficients $c^i_m$ are differential invariants.
The corresponding counting function $P(z)$ is rational in all known cases, but it is not arbitrary.
To see this consider the case $n=2$. Here are the main singularities:
 \begin{enumerate}
 \item
Non-resonant case: formal linearization, whence
 $$
P(z)=2z;
 $$
 \item
$\lambda_1/\lambda_2=m$ (or $\frac1m$) with $m\in\mathbb{N}$ fixed, $m>1$ (Poincar\'e domain), then
there is only one non-resonant term \cite{A2} and
 $$
P(z)=z+z^m;
 $$
 \item
$\lambda_1/\lambda_2\in\mathbb{Q}_-$ fixed (Siegel domain: here $\lambda_1,\lambda_2\in\R\cup i\R$),
there is infinity of non-resonant terms, but the normal form leaves only few of them.
A general saddle resonant singularity has the normal form ($p,q\in\N$ are fixed)
$v=p\lambda(x+x^{q+1}y^p)\p_x -q\lambda (y+ax^qy^{p+1}+bx^{2q}y^{2p+1})\p_y$ \cite{VG}
(an elliptic singularity has a similar normal form).
Thus $h_1=h_{m+1}=h_{2m+1}=1$ for $m=p+q$ and $h_i=0$ else, implying
 $$
P(z)=z+z^{m+1}+z^{2m+1};
 $$
 \item
$\lambda_1\neq0,\lambda_2=0$ (or otherwise around) -- the saddle-node point; the normal form here is
$v=\lambda(xQ_m(x)\p_x\pm(y^{m+1}+ay^{2m+1})\p_y$), where $Q_m(x)$ is a polynomial of $\op{deg}=m$ \cite{IY}
and consequently
 $$
P(z)=\frac{z-z^{m+1}}{1-z}+z^{2m+1};
 $$
 \item
$\lambda_1=\lambda_2=0$, but $d_0v\neq0$ (nilpotent linear part), this is the Takens-Bogdanov singularity;
the pre-normal Lienard form is $v=y\p_x+x(xa(x)+yb(x))\p_y$. It can be checked that for $a(0)\neq0\neq b(0)$
a formal change of variables yields $b(x)=0$, $a(x)=\sum_{k\in\Delta}a_kx^k$, where
$\Delta=\{n\in\Z_{\ge0}:n\not\in 3\N+1\}$; alternatively one can eliminate $a(x)$ except for two first terms
and one thrid of the terms of $b(x)$ \cite{WCW}. Both normal forms imply that $h_k$ is the characteristic function $\chi_{\Delta}(k)$, whence
 $$
P(z)=\frac{(1+z+z^2-z^4)z^2}{1-z^3}.
 $$
 \end{enumerate}
Further normal forms for more complicated degenerations can be found in \cite{SZ},
they lead to other rational Poincar\'e functions.

The mechanism explaining this rationality is not the same as in the Lie-Tresse theorem,
see the discussion in \cite{KL}. In Section \ref{S1} we derived the strong form of
Arnold's conjecture provided the pseudogroup $G$ acts transitively on $M$, and we showed many
explicit computations in Section \ref{S3}. It seems plausible that this approach
can be extended to the case when $G$-orbits foliate $M$.
However, in the presence of singular orbits, the general Arnold conjecture is still wide open.


\end{document}